\documentclass[12pt,dvipsnames]{article}

\usepackage{amsmath}
\usepackage{amsthm}
\usepackage{amsfonts}
\usepackage{stmaryrd}
\usepackage{amssymb}
\usepackage[all]{xy}
\usepackage{comment}
\usepackage{enumitem}

\usepackage{tikz-cd}
\usetikzlibrary{shapes}
%%%%%%%%%%%%%%%%%%%%%%%%%%%%%%%%%%%%%%%%%%%%%%%%%%%%%%

%%%%%%%%%%%%%%%%%%%%%%%%%%%%%%%%%%%%%%%%%%%%%%%%%%%%%%
\newcommand\C{\mathbb{C}}

\newcommand\N{\mathbb{N}}
\newcommand\R{\mathbb{R}}

\newcommand\Ha{\mathbb{H}}

\newcommand{\im}{\textup{im}}
\newcommand{\II}{\mathrm{I\!I}}

%%%%%%%%%%%%%%%%%%%%%%%%%%%%%%%%%%%%%%%%%%%%%%%%%%%%%%

%%%%%%%%%%%%%%%%%%%%%%%%%%%%%%%%%%%%%%%%%%%%%%%%%%%%%%
\theoremstyle{plain}
\newtheorem{theoremAlph}{Theorem}
 %For theorems labeled by letters
\newtheorem{corollaryAlph}[theoremAlph]{Corollary}

\newtheorem{theorem}{Theorem}[section]
\newtheorem{lemma}[theorem]{Lemma}
\newtheorem{corollary}[theorem]{Corollary}
\newtheorem{proposition}[theorem]{Proposition}

\theoremstyle{definition}
\newtheorem{definition}[theorem]{Definition}

\theoremstyle{remark}
\newtheorem{remark}[theorem]{Remark}

\theoremstyle{definition}

%%%%%%%%%%%%%%%%%%%%%%%%%%%%%%%%%%%%%%%%%%%%%%%%%%%%%%

%%%%%%%%%%%%%%%%%%%%%%%%%%%%%%%%%%%%%%%%%%%%%%%%%%%%%%
\usepackage{color}

\usepackage{xcolor}

\usepackage{soul}
\setstcolor{ForestGreen}

\usepackage{hyperref}
\hypersetup{
	colorlinks=true,
	linkcolor=blue,
	citecolor=blue,
	urlcolor=blue
}

%%%%%%%%%%%%%%%%%%%%%%%%%%%%%%%%%%%%%%%%%%%%%%%%%%%%%%

%%%%%%%%%%%%%%%%%%%%%%%%%%%%%%%%%%%%%%%%%%%%%%%%%%%%%%
\topmargin 0pt
\advance \topmargin by -\headheight
\advance \topmargin by -\headsep
     
\textheight 8.9in
     
\oddsidemargin 0pt
\evensidemargin \oddsidemargin
\marginparwidth 0.5in
     
\textwidth 6.5in
%%%%%%%%%%%%%%%%%%%%%%%%%%%%%%%%%%%%%%%%%%%%%%%%%%%%%%

\makeatletter
\newcommand{\subjclass}[2][1991]{%
	\let\@oldtitle\@title%
	\gdef\@title{\@oldtitle\footnotetext{#1 \emph{Mathematics subject classification.} #2}}%
}

\begin{document}

\renewcommand{\labelenumi}{(\roman{enumi})}
\renewcommand{\labelenumii}{(\alph{enumii})}

\title{Positive intermediate Ricci curvature on connected sums}
\author{Philipp Reiser\thanks{The author acknowledges funding by the SNSF-Project 200020E\textunderscore 193062 and the DFG-Priority programme SPP 2026.} \ and David J. Wraith}
\subjclass[2010]{53C20}

\maketitle

\begin{abstract}
We consider the problem of performing connected sums in the context of positive $k^{th}$ intermediate Ricci curvature. We show that such connected sums are possible if the manifolds involved possess `$k$-core metrics' for some $k$. Here, a $k$-core metric is a generalization of the notion of core metric introduced by Burdick for positive Ricci curvature. Further, we show that connected sums of linear sphere bundles over bases admitting such metrics admit positive $k^{th}$ intermediate Ricci curvature for $k$ in a particular range. This follows from a plumbing result we establish, which generalizes other recent plumbing results in the literature and is possibly of independent interest. As an example of a manifold admitting a $k$-core metric, we prove that $\Ha P^n$ admits a $(4n-3)$-core metric and that $\mathbb{O}P^2$ admits a $9$-core metric, and we show that in both cases these are optimal.

\end{abstract}

\section{Introduction}\label{intro}

Given two (or more) closed manifolds with the same dimension, the operation of connected sum is perhaps the most basic topological operation one can perform that again yields a closed manifold with the same dimension. If one wants to understand the interplay between the topology of manifolds and some aspect of geometry, investigating how that geometry behaves under connected sums is thus a fundamental task.

In Riemannian geometry, it is natural to ask what kind of curvature bounds can be preserved under connected sums. In general, this turns out to be a very hard question.

Given two manifolds, both with dimension $d \ge 3$ and Riemannian metrics of positive scalar curvature, Gromov--Lawson \cite{GL} famously showed that the connected sum also admits a metric of positive scalar curvature. Moreover, this metric can be chosen so as to agree with the original metrics outside a neighbourhood of the connected sum.

At the other extreme, it is so difficult to find metrics of positive sectional curvature that the question of behaviour under connected sums is unreasonable. In general, it follows from Gromov's Betti number bound \cite{Gr} that arbitrary connected sums cannot preserve positive sectional curvature. Cheeger \cite{Ch} showed that the connected sum of a pair of compact rank-one symmetric spaces admits a metric with non-negative sectional curvature. However it is unknown, for example, whether the connected sum of three such spaces admits non-negative sectional curvature. Indeed the Bott Conjecture on the rational ellipticity of simply-connected closed manifolds admitting non-negative sectional curvature implies that such metrics should not exist.

In between the scalar and sectional curvatures lies the Ricci curvature. The question of whether the connected sum of two manifolds with positive Ricci curvature also supports a metric of positive Ricci curvature turns out to be intriguing. By the theorem of Bonnet--Myers the connected sum of two closed, non-simply-connected Ricci-positive manifolds cannot admit a metric of positive Ricci curvature. However, if at least one of the manifolds is simply-connected, the question is open. This problem was systematically studied by Burdick \cite{Bu1,Bu2,Bu3}, who, based on earlier work by Perelman \cite{Pe}, introduced the notion of \emph{core metrics} and showed that the connected sum of manifolds with core metrics admits a metric of positive Ricci curvature.

%We consider the question of whether a given curvature condition can be preserved under connected sums. While this is always possible for positive scalar curvature as shown by Gromov--Lawson \cite{GL}, Gromov's Betti number bound \cite{Gr} implies that the same cannot be true for positive sectional curvature. For positive Ricci curvature, the situation is more delicate: By the theorem of Bonnet--Myers the connected sum of two closed, non-simply-connected Ricci-positive manifolds cannot admit a metric of positive Ricci curvature. However, if at least one of the manifolds is simply-connected, the question is open. This problem was systematically studied by Burdick \cite{Bu1,Bu2,Bu3}, who, based on earlier work by Perelman \cite{Pe}, introduced the notion of \emph{core metrics} and showed that the connected sum of manifolds with core metrics admits a metric of positive Ricci curvature.

In this article, we consider a natural family of positive curvature conditions which interpolate between positive Ricci curvature and positive sectional curvature:
%extend Burdick's techniques to a family of stricter curvature conditions:
\begin{definition}
	A Riemannian manifold $(M^n,g)$ has \emph{positive $k$-th intermediate Ricci curvature} for some $k\in\{1,\dots,n-1\}$, denoted $Ric_k>0$, if for every unit tangent vector $v\in TM$ and any orthonormal $k$-frame $(e^i)$ in $v^\perp$ the sum $\sum_{i=1}^{k}K(v,e^i)$ is positive, where $K$ denotes the sectional curvature.
\end{definition}
Note that for $k=1$ and $k=n-1$, we recover the conditions of positive sectional curvature and positive Ricci curvature, respectively. Although intermediate curvatures have appeared in the literature for several decades, in recent times there has been a dramatic increase in interest in these curvatures. For an up-to-date list of papers which feature intermediate curvatures, see \cite{Mo}.
\smallskip

The main goal of this paper is to establish conditions under which connected sums admit metrics with $Ric_k>0$.

Our first main result, Theorem \ref{k-core_conn_sum}, provides a generalization of Burdick's results to intermediate Ricci curvatures. This theorem requires a generalization of Burdick's notion of core metric, and we illustrate this new notion with reference to projective spaces (Theorem 
\ref{k-core_examples}).

The plumbing of disc bundles has proved to be a very important topological construction in the realm of positive Ricci curvature. See for example \cite{CW}. We prove a plumbing result for $Ric_k>0$ (Theorem \ref{plumbing}), and then illustrate this by providing examples of connected sums between linear sphere bundles which admit metrics with positive intermediate Ricci curvatures (Corollary \ref{sphere-bundles_conn-sum}).
\smallskip

In order to give a precise statement of the results, we must begin by defining our generalization of Burdick's core metrics:

	\begin{definition}\label{k-core}
		Let $M$ be an $n$-dimensional manifold and let $k\in\{1,\dots,n-1\}$. A Riemannian metric $g$ on $M$ is called a \emph{$k$-core metric} if $g$ has $Ric_k>0$ and if there exists an embedding $\varphi\colon D^n\hookrightarrow M$ such that
		\begin{enumerate}
			\item The induced metric $g|_{\varphi(S^{n-1})}$ is the round metric of radius one, and
			\item $\II_{\varphi(S^{n-1})}$ is positive semi-definite with respect to the outward normal of $S^{n-1}\subseteq D^n$.
		\end{enumerate}
	\end{definition}
	Note that for $k=n-1$ we recover the original definition given in \cite{Bu2} except for the fact that the second fundamental form is required to be strictly positive in \cite{Bu2}. However, a core metric in the sense of Definition \ref{k-core} can always be deformed into a core metric in the sense of \cite{Bu2}, see e.g.\ \cite[Proposition 1.2.11]{Bu1}.
	
	In \cite[Theorem B]{Bu2}, it is shown that connected sums of manifolds with $(n-1)$-core metrics support positive Ricci curvature. We can now generalize this as follows.
	
	\begin{theoremAlph}\label{k-core_conn_sum}
		Let $M_1,\dots,M_\ell$ be $n$-dimensional manifolds that admit $k$-core metrics, where $k\geq 2$. Then $M_1\#\dots\# M_\ell$ admits a metric with $Ric_k>0$.
	\end{theoremAlph}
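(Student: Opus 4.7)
The strategy generalizes Burdick's connected sum construction to the intermediate Ricci setting. The idea is to excise a disc from each $M_i$ using its $k$-core embedding, join the resulting boundaries via a ``connector'' carrying a $\mathrm{Ric}_k>0$ metric with strictly convex boundary, and smooth the glued metric.

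For each $i$, set $N_i := M_i \setminus \varphi_i(\mathring{D}^n)$. The restricted $k$-core metric has $\mathrm{Ric}_k>0$, the boundary $\partial N_i$ is a round unit $(n-1)$-sphere, and the second fundamental form of $\partial N_i$ with respect to the outward normal of $N_i$ equals $-\II_{\varphi_i(S^{n-1})}$, hence is \emph{negative} semi-definite. The task is therefore to produce a Riemannian manifold $(P_\ell,h)$ diffeomorphic to $S^n \setminus \bigsqcup_{i=1}^\ell \mathring{D}^n$ having $\mathrm{Ric}_k(h)>0$, each boundary component isometric to the round unit $S^{n-1}$, and the second fundamental form at each boundary \emph{strictly} positive definite with respect to the outward normal of $P_\ell$. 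Given this, the sum of outward second fundamental forms from $N_i$ and $P_\ell$ at each gluing interface is strictly positive definite (since the negative semi-definite contribution from the $N_i$ side is dominated by the strictly positive contribution from $P_\ell$), so a standard gluing-and-smoothing argument, adapted from the Perelman/Burdick construction, produces a smooth metric with $\mathrm{Ric}_k>0$ on $M_1\#\dots\#M_\ell$. The adaptation to $\mathrm{Ric}_k$ should be routine because $\mathrm{Ric}_k$ is a sum of sectional curvatures, and the smoothing is local near the corner.

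For $\ell=2$, $P_2 \cong S^{n-1}\times[0,1]$, and a warped product $dt^2 + f(t)^2 g_{S^{n-1}}$ with $f(0)=f(1)=1$, $f'(0)>0>f'(1)$, $f''<0$ on $[0,1]$, and $|f'|<1$ will do: by the standard warped product formulas the sectional curvatures $-f''/f$ (mixed planes) and $(1-(f')^2)/f^2$ (tangential planes) are both strictly positive, so every $k$-sum is positive, and the boundary conditions hold by construction. A small concave perturbation of $f(t) = 1+\varepsilon\sin(\pi t)$ making $f''(0), f''(1) < 0$ yields such an $f$. For general $\ell$, one can either construct $P_\ell$ by an iterated warped-product gluing, or argue inductively: the $\ell=2$ construction can be arranged so that $M_1\# M_2$ inherits a $k$-core metric (place the reference disc of Definition \ref{k-core} in an unaffected region of one factor), and then iterate.

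The main obstacle is the verification that such a connector metric exists and has $\mathrm{Ric}_k>0$ for \emph{all} orthonormal $(k+1)$-flags, including flags of mixed radial--tangential type; this is where the hypothesis $k\ge 2$ enters, since for $k=1$ the construction reduces to producing positive sectional curvature on a nontrivial connected sum, which is obstructed. A secondary technical point is establishing (or quoting) a smoothing lemma for $\mathrm{Ric}_k>0$ along a corner where the sum of outward second fundamental forms is strictly positive definite; this should follow by the same mollification procedure used in the Ricci case, using the sum structure of $\mathrm{Ric}_k$ to transfer sectional curvature estimates through the smoothing.
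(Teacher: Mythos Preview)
Your overall plan---excise the discs, build a connector $P_\ell$ with $\ell$ round unit boundary spheres and $\mathrm{Ric}_k>0$, then glue and smooth---is exactly the paper's strategy, and the required gluing theorem for $\mathrm{Ric}_k>0$ is indeed available (Theorem~\ref{gluing}, quoted from \cite{RW3}). Two genuine gaps remain, however.

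First, the domination claim in your gluing step is false as stated. That $\II^{out}_{N_i}=-\II_{\varphi_i(S^{n-1})}$ is negative semi-definite gives no bound on how negative its eigenvalues may be; the $k$-core definition imposes no upper bound on $\II_{\varphi_i(S^{n-1})}$. Meanwhile your warped-product cylinder is constrained by $|f'|<1$ (this is forced by the tangential sectional curvatures once $n\ge 4$), so its boundary principal curvatures have magnitude strictly below $1$. The sum $\II^{out}_{N_i}+\II^{out}_{P_\ell}$ therefore need not be positive semi-definite. The paper resolves this by reversing the quantitative roles: it first perturbs each $N_i$ so that its second fundamental form becomes \emph{strictly} positive, with some minimum principal curvature $\nu_0>0$, and then constructs the connector so that all its principal curvatures equal $-\nu$ for an \emph{arbitrarily small} $\nu>0$. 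Choosing $\nu<\nu_0$ then makes the gluing hypothesis hold regardless of how small $\nu_0$ happens to be.

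Second, your reduction to $\ell=2$ does not go through. The inductive route presumes that $M_1\#M_2$ inherits a $k$-core metric from an ``unaffected region,'' but the only disc meeting the rigid requirements of Definition~\ref{k-core} (round \emph{unit} boundary with $\II\ge 0$) is the one already excised; the hypothesis supplies no second such disc, and a local constant-curvature deformation only produces small-radius boundary spheres. Building $P_\ell$ directly for all $\ell$, with arbitrarily small boundary principal curvatures, is precisely the technical heart of the paper (Theorem~\ref{docking_station}): one takes Perelman's ambient metric of positive sectional curvature on $S^n\setminus\sqcup_\ell(D^n)^\circ$ (Proposition~\ref{ambient_space}) and attaches to each boundary a copy of Perelman's neck. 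The substantive new work is the verification, via the curvature-operator criterion of Proposition~\ref{algebra}, that this neck has $\mathrm{Ric}_2>0$ rather than merely positive Ricci (Proposition~\ref{neck}); this is where $k\ge 2$ actually enters, and it is not a routine singly-warped computation, since the neck is a genuinely two-variable warped metric with off-diagonal curvature terms.
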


	The main ingredients in the proof of Theorem  \ref{k-core_conn_sum} are the gluing theorem for positive intermediate Ricci curvature established in \cite{RW3}, together with the construction of a metric with $Ric_2>0$ on $S^n\setminus\sqcup_{\ell}(D^n)^\circ$, (which is called the \emph{docking station} in \cite{Bu2}),  whose second fundamental form on the boundary can be made arbitrarily small, see Theorem \ref{docking_station}.

	\begin{remark}
		Since the metric on the docking station is invariant under the action of $O(n-1)O(2)\subseteq O(n+1)$, we can take quotients by finite subgroups of $O(n-1)O(2)$ that act freely as in \cite[Corollary 4.7]{Bu2}. In this way we obtain in the situation of Theorem \ref{k-core_conn_sum} that $\R P^n\# M_1\#\dots\# M_\ell$ and $L\# M_1\#\dots\# M_\ell$ admits a metric of $Ric_k>0$, where $L$ is any $n$-dimensional lens space (and $n$ is assumed to be odd in this case). Note that by \cite[Lemma 1.2.9]{Bu1}, lens spaces and real projective spaces are the only additional summands we can obtain in this way.
	\end{remark}

	Concerning the existence of $k$-core metrics, by a result of Wu \cite{Wu}, the boundary condition (ii) in Definition \ref{k-core} imposes the following topological obstruction.
	
	\begin{proposition}\label{k-core_obstr}
		Let $M$ be a closed $n$-dimensional manifold that admits a $k$-core metric. Then $M$ is $(n-k)$-connected. In particular, if $k\leq \lfloor\frac{n+1}{2}\rfloor$, then $M$ is a homotopy sphere.
	\end{proposition}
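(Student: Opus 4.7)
The plan is to pass to the complement $N := M \setminus \varphi(D^n)^{\circ}$ and apply a theorem of Wu \cite{Wu} on compact manifolds with $Ric_k > 0$ and suitably convex boundary, then read off the connectivity of $M$.

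First, I would translate the boundary condition in Definition~\ref{k-core} to the viewpoint of $N$. The outward unit normal of $S^{n-1}\subset D^n$, transported to $M$ via $\varphi$, points away from $\varphi(D^n)$ and so coincides with the \emph{inward} unit normal of $\partial N$ in $N$. Thus condition~(ii) asserts that the second fundamental form of $\partial N$, computed with respect to the inward normal of $N$, is positive semi-definite; in particular $\partial N$ is $k$-convex in the sense that the sum of any $k$ principal curvatures is non-negative. Combined with $Ric_k > 0$ in the interior of $N$, this puts $N$ in the setting of Wu's theorem, which I would invoke to conclude that the inclusion $\partial N \hookrightarrow N$ is $(n-k)$-connected, that is, $\pi_i(N, \partial N) = 0$ for $0 \le i \le n-k$.

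Next, the long exact sequence of the pair $(N, \partial N)$, together with $\pi_i(S^{n-1}) = 0$ for $i \le n-2$, yields $\pi_i(N) = 0$ for $i \le \min(n-k, n-2)$. Writing $M = N \cup_{\partial N} \varphi(D^n)$ as the attachment of a single $n$-cell to $N$ along $\partial N = S^{n-1}$, and recalling that such an attachment affects only $\pi_i$ for $i \ge n-1$, we obtain $\pi_i(M) \cong \pi_i(N) = 0$ for $i \le n-k$ when $k \ge 2$. In the remaining case $k = 1$, the long exact sequence only ensures that $\pi_{n-1}(\partial N) \twoheadrightarrow \pi_{n-1}(N)$ is surjective; but the cell attachment kills the image of this surjection in $\pi_{n-1}(M)$, so $\pi_{n-1}(M) = 0$ as well. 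In all cases, $M$ is $(n-k)$-connected.

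Finally, when $k \le \lfloor (n+1)/2 \rfloor$ we have $n - k \ge \lfloor n/2 \rfloor$, so $M$ is at least $\lfloor n/2 \rfloor$-connected; Poincar\'e duality (applicable as $M$ is now simply connected, hence orientable) forces all remaining reduced homology to vanish, and the Hurewicz and Whitehead theorems identify $M$ as a homotopy sphere. The main obstacle is the first step --- aligning the sign convention for the second fundamental form in Definition~\ref{k-core} with the $k$-convexity hypothesis in Wu's theorem; once this translation is made, the remaining steps are standard homotopy and duality theory.
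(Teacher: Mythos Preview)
Your proposal is correct and follows essentially the same route as the paper: both apply Wu's theorem to the complement $N=M\setminus\varphi(D^n)^\circ$ with the convexity hypothesis coming from condition~(ii), and then deduce $(n-k)$-connectedness of $M$. The only cosmetic difference is packaging---the paper quotes Wu's result in its CW form (\emph{$N$ is built from $\partial N$ by attaching cells of dimension $\ge n-k+1$}) and reads off a cell structure on $M$ directly, whereas you use the equivalent relative-homotopy statement $\pi_i(N,\partial N)=0$ and the long exact sequence; both arrive at the same conclusion, and the Poincar\'e-duality step for the homotopy-sphere assertion is identical.
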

	
	We immediately obtain the following restrictions in low dimensions: Every closed 3-manifold with a $k$-core metric is diffeomorphic to the standard sphere and the same holds in dimension 5 when $k\leq 3$. In dimension $4$ every closed manifold with a $k$-core metric is homeomorphic to the standard sphere when $k\leq 2$.

		On the other hand, it is easy to see that the round metric on $S^n$ is a $1$-core metric. Further, by \cite{Bu2}, complex and quaternionic projective spaces and the Cayley plane of dimension $n$ admit $(n-1)$-core metrics, where $n$ denotes the real dimension of the corresponding manifold. By Proposition \ref{k-core_obstr}, this is optimal for complex projective spaces. For quaternionic projective spaces and the Cayley plane we obtain the following improvement, which again is optimal by Proposition \ref{k-core_obstr}.
		
		\begin{theoremAlph}\label{k-core_examples}
			$\Ha P^{n}$ admits a $(4n-3)$-core metric and $\mathbb{O} P^2$ admits a $9$-core metric.
		\end{theoremAlph}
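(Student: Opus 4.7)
The strategy is to construct the required core metric on $\Ha P^n$ (and, mutatis mutandis, on $\mathbb{O}P^2$) as a doubly warped product adapted to the quaternionic Hopf fibration $\pi\colon S^{4n-1}\to \Ha P^{n-1}$ with fibre $S^3$ (resp.\ the octonionic Hopf fibration $\pi\colon S^{15}\to S^8$ with fibre $S^7$). In polar coordinates around a point, the Fubini--Study metric takes the form
$$g=dr^2+f(r)^2\,\pi^*g_B+h(r)^2\,g_F,$$
with $g_B$ the canonical metric on the base, $g_F$ the round metric on the fibre, and $f(r)=\sin r$, $h(r)=\sin r\cos r$ in the Fubini--Study case. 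I retain this ansatz but choose $f,h\colon[0,L]\to\R_{\ge 0}$ subject to: (i) $f(r),h(r)=r+O(r^3)$ as $r\to 0$, so that the metric extends smoothly across the tip; (ii) $h(L)=0$, $h'(L)=-1$ and $f(L)>0$, so that the fibre collapses smoothly along the antipodal $\Ha P^{n-1}$ (resp.\ $S^8$); and (iii) $f(R)=h(R)=1$ with $f'(R),h'(R)\ge 0$ at some $R\in(0,L)$, so that the geodesic sphere $\{r=R\}$ is a round unit $S^{4n-1}$ (resp.\ $S^{15}$) whose outward second fundamental form $\II=\mathrm{diag}\bigl(f'(R)/f(R),h'(R)/h(R)\bigr)$ is positive semi-definite. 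The metric ball of radius $R$ then provides the embedded disk required by Definition \ref{k-core}.

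The main computation is to show that such $f,h$ can be arranged so that the whole metric satisfies $Ric_{4n-3}>0$ (resp.\ $Ric_{9}>0$). Applying O'Neill's formulas for the Hopf submersion combined with the warping, the sectional curvatures split into: radial curvatures $K(\partial_r,X_H)=-f''/f$ and $K(\partial_r,X_V)=-h''/h$; horizontal-base curvatures of the form $(K_B-(f')^2)/f^2$ plus a positive $A$-tensor term of order $h^2/f^4$; vertical-fibre curvatures $(1-(h')^2)/h^2$; and mixed curvatures $-f'h'/(fh)$ augmented by a positive $A$-tensor term. The decisive observation is that the base $\Ha P^{n-1}$ (resp.\ $S^8$) supplies $4n-4$ (resp.\ $8$) strictly positive horizontal sectional curvatures at every point, augmented by the non-negative $A$-tensor contributions; summing any $k$ such curvatures with $k=4n-3$ (resp.\ $k=9$) leaves at most two `problematic' directions and therefore yields a strictly positive total, provided the radial terms $f''/f,h''/h$ and the mixed term $f'h'/(fh)$ are kept uniformly small.

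The main obstacle is the concrete engineering of $f,h$ realising both (i)--(iii) and this curvature estimate. A naive perturbation of Fubini--Study fails: on $(0,\pi/2)$ one has $\sin r\cos r<\sin r$, so the equality $f(R)=h(R)=1$ requires a substantial modification, and moreover $f'(r),h'(r)<0$ at the natural candidates for $R$. I will therefore construct $f,h$ in the spirit of Burdick's $(n{-}1)$-core construction \cite{Bu2}, with additional control on the second derivatives, concave ($f''<0,\,h''<0$) in the relevant range, exhibiting the required inflection structure, and with slopes so chosen that $-f'h'/(fh)$ is suitably small. A case analysis on the radial/horizontal/vertical decomposition of an arbitrary unit vector $v$ and its orthogonal $k$-frame will reduce $Ric_k>0$ to a finite system of pointwise inequalities for $f,h$, solvable by an explicit (possibly piecewise) choice. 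Optimality of the values $k=4n-3$ and $k=9$ then follows immediately from Proposition \ref{k-core_obstr}, since $\Ha P^n$ is $3$-connected and $\mathbb{O}P^2$ is $7$-connected.
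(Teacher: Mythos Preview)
Your proposal differs substantially from the paper's argument, and the point you yourself flag as ``the main obstacle'' is a genuine gap rather than a routine computation.

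The paper does not use a doubly warped product on all of $\Ha P^n$ in polar coordinates from a point. Instead it realises $\Ha P^n\setminus (D^{4n})^\circ$ (and analogously $\mathbb{O}P^2\setminus(D^{16})^\circ$) as a cohomogeneity-one disc bundle $G\times_K D\to G/K$, with $G,K,H$ as in Table~\ref{table}, and equips $G\times D$ with a Cheeger-type metric $L_\epsilon+(dt^2+f(t)^2 ds_m^2)$: a left-invariant metric $L_\epsilon$ on $G$ (a small deformation of the one inducing the round metric on $G/H$) together with a \emph{single} warping of the disc factor. The crucial payoff is that the induced submersion metric $\check g_\epsilon$ on $G\times_K D$ has \emph{non-negative sectional curvature} for free, by Cheeger's argument \cite{Ch}, \cite[Lemma 4.1]{BM}. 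Once $\sec\ge 0$ is known, establishing $Ric_k>0$ reduces to bounding, for each unit $u$, the dimension of the set $Z_u=\{v\in u^\perp\setminus\{0\}:\sec(u\wedge v)=0\}$. Using the O'Neill formula and the fact that both $G/H$ and $(D,h_f)$ have strictly positive curvature, the paper shows $Z_u$ is always contained in a $\dim(G/K)$-dimensional subspace, giving $Ric_{\dim(G/K)+1}>0$; since $\dim(G/K)=4n-4$ for $\Ha P^n$ and $8$ for $\mathbb{O}P^2$, the optimal values follow. The round boundary is arranged by the single algebraic choice $f(t_0)=\sqrt{(1+\epsilon)/(b\epsilon)}$, with $f'(t_0)\ge 0$ ensuring $\II\ge 0$.

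Your route, by contrast, asks for two warping functions $f,h$ satisfying simultaneously the smoothness conditions at both singular orbits, the round-sphere condition $f(R)=h(R)=1$ with $f'(R),h'(R)\ge 0$, and a full family of pointwise curvature inequalities. Your ``decisive observation'' that the base supplies $4n-4$ positive horizontal curvatures does not by itself control $Ric_{4n-3}$: when $v=\partial_r$ the relevant sectional curvatures $K(\partial_r,e_i)$ are purely radial ($-f''/f$ and $-h''/h$) and see no base curvature whatsoever; and when $v$ is horizontal, the mixed curvatures $K(v,X_V)=-f'h'/(fh)+(\text{A-tensor})$ are a priori negative on $(0,R)$ where $f',h'>0$. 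Burdick's construction in \cite{Bu2} achieves only positive Ricci, and upgrading it to the optimal intermediate value requires new, sharp control of all these cross-terms that you have not supplied. The paper's approach sidesteps this entirely: by inheriting $\sec\ge 0$ from the Cheeger construction, no explicit warping-function engineering is needed, and the problem becomes a clean linear-algebra count of zero-curvature planes.
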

	
		In \cite{RW1} it was shown that a Betti number bound as in the case of non-negative sectional curvature \cite{Gr} cannot hold for $Ric_k>0$ for all $k\geq\lfloor \frac{n}{2}\rfloor+2$, where $n$ denotes the dimension. By considering connected sums of copies of $\Ha P^2$ and $\mathbb{O}P^2$ using Theorems \ref{k-core_conn_sum} and \ref{k-core_examples}, we can slightly improve this result as follows.
		\begin{corollaryAlph}
			For any $\ell\in\N$ the manifold $\#_\ell \Ha P^2$ admits a metric of $Ric_5>0$ and the manifold $\#_\ell \mathbb{O} P^2$ admits a metric of $Ric_9>0$. In particular, Gromov's Betti number bound does not hold in dimension $8$ for $Ric_5>0$ and in dimension $16$ for $Ric_9>0$.
		\end{corollaryAlph}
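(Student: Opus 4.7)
The plan is a direct assembly of the two main theorems just stated, followed by an elementary Betti number computation. No serious obstacle arises once Theorems \ref{k-core_conn_sum} and \ref{k-core_examples} are in hand; all the real work is in the proof of Theorem \ref{k-core_examples}.

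First, observe that $\Ha P^2$ has real dimension $8$, so Theorem \ref{k-core_examples} (applied with $n=2$) supplies a $(4\cdot 2 - 3) = 5$-core metric on $\Ha P^2$. Since $5 \geq 2$, Theorem \ref{k-core_conn_sum} applied to $\ell$ copies of $\Ha P^2$ then yields a metric with $Ric_5 > 0$ on $\#_\ell \Ha P^2$. Identically, Theorem \ref{k-core_examples} equips $\mathbb{O}P^2$ (of dimension $16$) with a $9$-core metric, and since $9 \geq 2$ a further application of Theorem \ref{k-core_conn_sum} produces a metric with $Ric_9 > 0$ on $\#_\ell \mathbb{O}P^2$.

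For the Betti number statement, I would simply note that for a closed oriented $n$-manifold $M$ with $n \geq 3$, a Mayer--Vietoris computation gives $b_i(\#_\ell M) = \ell\, b_i(M)$ for $0 < i < n$. Taking $M = \Ha P^2$, for which $b_4(\Ha P^2) = 1$, yields $b_4(\#_\ell \Ha P^2) = \ell$, so the total Betti number grows without bound while the dimension stays fixed at $8$. The analogous argument with $M = \mathbb{O}P^2$ gives $b_8(\#_\ell \mathbb{O}P^2) = \ell$ in dimension $16$. A Gromov-style universal bound on the total Betti number in a fixed dimension would therefore be violated for large $\ell$, which is exactly the asserted failure of Gromov's bound for $Ric_5 > 0$ in dimension $8$ and for $Ric_9 > 0$ in dimension $16$.
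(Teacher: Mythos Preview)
Your proof is correct and follows exactly the approach the paper intends: the paper does not give a separate formal proof of this corollary, but simply observes in the text preceding it that the result follows by applying Theorems \ref{k-core_conn_sum} and \ref{k-core_examples} to connected sums of copies of $\Ha P^2$ and $\mathbb{O}P^2$. Your explicit Betti number computation via Mayer--Vietoris is a welcome addition that the paper leaves implicit.
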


		By using manifolds with $k$-core metrics as base manifolds of fibre bundles, we can also consider plumbings as in the following theorem, which generalizes results for positive Ricci curvature in \cite{Re1} and \cite{Wr1}, and for positive intermediate Ricci curvature in \cite{RW2}.
		
		\begin{theoremAlph}\label{plumbing}
			Let $W$ be the manifold obtained by plumbing linear disc bundles with compact base manifolds according to a simply-connected graph. Suppose the following:
			\begin{enumerate}
				\item For a fixed bundle in this graph the base admits a metric with $Ric_{k_1}>0$ for some $k_1$. Denote the base dimension by $q+1$ and the fibre dimension by $p+1$,
				\item Every other bundle in this graph with base dimension $q+1$ admits a $k_1$-core metric,
				\item Every bundle with base dimension $p+1$ admits a $k_2$-core metric for some $k_2$.
			\end{enumerate}
			Then, if $p,q\geq 2$, the manifold $\partial W$ admits a metric of $Ric_k>0$ for all $k\geq \max\{p+2,p+k_1,q+2,q+k_2\}$.
		\end{theoremAlph}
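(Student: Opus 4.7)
The strategy is to describe $\partial W$ as a union of sphere bundles with discs removed in their bases, glued pairwise along boundary components of the form $S^q\times S^p$, construct a $Ric_k>0$ metric on each piece with the appropriate round-product boundary data, and finally glue via the gluing theorem for positive intermediate Ricci curvature of \cite{RW3}.

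\emph{Decomposition of $\partial W$.} Since each plumbing edge identifies a disc in the base of one bundle with a disc in the fibre of the other, the graph is bipartite with respect to the pair (base dimension, fibre dimension): neighbours of a vertex of type $(q+1,p+1)$ are of type $(p+1,q+1)$. Simple-connectedness of the graph then allows one to describe $\partial W$ as the union, indexed by vertices $v$ of degree $m_v$, of $S(E_v)\big|_{\wh{B}_v}$, where $\wh{B}_v$ is $B_v$ with $m_v$ small open discs removed (one per incident edge). Pieces corresponding to vertices joined by an edge are glued by identifying a distinguished boundary component $S^q\times S^p$ on one side with the corresponding $S^p\times S^q$ boundary on the other via a swap of factors.

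\emph{Metrics on the pieces.} For a vertex $v$ whose base admits a $k$-core metric (namely $k=k_1$ for a $(q+1,p+1)$-vertex by (ii), and $k=k_2$ for a $(p+1,q+1)$-vertex by (iii)), one first produces a metric on $B_v$ with $Ric_k>0$ and $m_v$ round boundary spheres of arbitrarily small second fundamental form by attaching $m_v-1$ copies of the docking station of Theorem \ref{docking_station} via the construction underlying Theorem \ref{k-core_conn_sum}: this metric has $Ric_2>0$ in the docking regions and $Ric_k>0$ in the core region. One then lifts it via the Vilms connection-metric construction, with unit round fibres and the natural linear connection, to the sphere bundle $S(E_v)$. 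Standard bundle estimates (compare \cite{RW2}) show that this submersion metric has $Ric_{k+p}>0$ (resp.\ $Ric_{k+q}>0$) over the core part of $B_v$ and $Ric_{p+2}>0$ (resp.\ $Ric_{q+2}>0$) over the docking part, and the boundary of $S(E_v)\big|_{\wh{B}_v}$ is a disjoint union of round products $S^q\times S^p$ of matching radii with arbitrarily small second fundamental form. For the distinguished vertex of (i) one does not begin with a $k_1$-core metric, so one first applies a Ricci-type disc modification inside small geodesic balls in $B_v$ (adapting the techniques of \cite{Re1,Wr1}) to install $m_v$ core-type disc regions in the given $Ric_{k_1}>0$ metric before lifting to the bundle.

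\emph{Gluing and range of $k$.} All boundary components thus produced are mutually isometric round $S^q\times S^p$ with arbitrarily small, and (after a further small deformation) strictly positive, second fundamental form. The gluing theorem of \cite{RW3} then permits us to glue the two sides across each edge while preserving $Ric_k>0$ for $k$ in the asserted range $k\geq\max\{p+2,\,p+k_1,\,q+2,\,q+k_2\}$; iterating over all edges of the simply-connected graph produces a metric of $Ric_k>0$ on the whole of $\partial W$. The principal technical difficulty is the pointwise verification, via the Vilms construction, that the two regimes $Ric_{k+p}>0$ and $Ric_{p+2}>0$ (and the corresponding estimates on $p$-base vertices) are realised over the core and docking regions respectively while still producing a precisely round-product boundary for the gluing; this is more delicate than the analogous calculation for Ricci curvature because $Ric_k$ depends sensitively on the choice of $k$-frame. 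A secondary obstacle is adapting the Ricci-positive disc-modification technique to the intermediate setting at the distinguished vertex.
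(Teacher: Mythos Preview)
Your approach differs substantially from the paper's, which does \emph{not} glue the bundle pieces to one another directly. Instead, the paper views $\partial W$ as built by iterated generalized surgery: one starts from the sphere bundle over the distinguished vertex (equipped with a submersion metric over the given $Ric_{k_1}>0$ base, using \cite[Corollary~3.1]{RW2}), deforms this metric via Corollary~\ref{local_def_curv1} so that it has constant sectional curvature $1$ near each plumbing point, and then applies Theorem~\ref{surgery} once per edge. The point of Theorem~\ref{surgery} is that it already contains an intermediate transition piece (coming from \cite[Theorem~C]{RW1}) whose boundary $\partial D^{p+1}_{R'}(N')\times S^q(\rho')$ has principal curvatures tending to $0$ in the $S^p$-direction and identically $0$ in the $S^q$-direction; this is glued to a bundle piece over a \emph{core} base, whose boundary has strictly positive principal curvatures in the $S^p$-direction and $0$ in the fibre direction, so the sum is non-negative.

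Your direct-gluing plan breaks precisely at the second-fundamental-form hypothesis of the gluing theorem. On a bundle piece $S(E_v)|_{\wh B_v}$ with totally geodesic fibres and horizontal distribution integrable near the boundary, the boundary $S^q\times S^p$ has $\II=-\nu$ in the base ($S^q$-)directions (inherited from the docking station of Theorem~\ref{docking_station}) and $\II=0$ in the fibre ($S^p$-)directions. Under the swap identification with the adjacent $(p+1,q+1)$-piece, the $S^q$-base directions on one side meet the $S^q$-\emph{fibre} directions on the other, so the sum of second fundamental forms there is $-\nu+0<0$; symmetrically the $S^p$-directions give $0+(-\nu')<0$. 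Your claim that ``after a further small deformation'' one can make $\II$ strictly positive on both sides is unjustified: the docking station is specifically engineered to have $\II=-\nu$, and converting this concave boundary into a convex one while preserving $Ric_k>0$ is exactly the non-trivial content absorbed into the transition piece of \cite[Theorem~C]{RW1} inside Theorem~\ref{surgery}. (Your treatment of the distinguished vertex is also imprecise; the paper's replacement is the local deformation to constant curvature of Corollary~\ref{local_def_curv1}, which produces the isometric embedding $S^p(\rho)\times D^{q+1}_R(N)\hookrightarrow M$ that Theorem~\ref{surgery} requires.)
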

	
		We can use plumbings as in Theorem \ref{plumbing} to construct connected sums of sphere bundles as follows.
		
		\begin{corollaryAlph}\label{sphere-bundles_conn-sum}
			Let $E_i\to B_i^q$, $1\leq i\leq \ell$, be linear $S^p$-bundles with compact base manifolds such that $B_1$ admits a metric of $Ric_k>0$ and each $B_i$, $2\leq i\leq \ell$, admits a $k$-core metric. Then the connected sum $E_1\#\dots\# E_\ell$ admits a metric of $Ric_k>0$ for all $k\geq \max\{p+2,p+k,q+1\}$.
		\end{corollaryAlph}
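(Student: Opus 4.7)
The plan is to realize $E_1\#\cdots\#E_\ell$ as the boundary of a plumbing $W$ of linear disc bundles satisfying the hypotheses of Theorem~\ref{plumbing}, and then to invoke that theorem. Let $D_i\to B_i^q$ denote the associated linear $D^{p+1}$-bundle, so that $\partial D_i=E_i$.

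I set up the plumbing graph as a star (or equivalently a chain, with appropriate connectors) whose leaves carry the $D_i$ and whose remaining vertices carry auxiliary ``connector'' bundles: trivial $D^q$-bundles over $S^{p+1}$. These connectors have base dimension $p+1$ and fibre dimension $q$, matching the fibre/base dimensions of each $D_i$, so the plumbings are well-defined. A standard computation of the boundary of a plumbing---at each plumbing point one removes a tubular neighbourhood $D^q\times S^p$ of a fibre from the sphere bundle, removes tubes $D^{p+1}\times S^{q-1}$ over small discs of each connector base, and glues along the corner spheres $S^p\times S^{q-1}$---shows that, with a careful choice of the graph and of the plumbing points in each $S^{p+1}$, $\partial W$ is diffeomorphic to the ordinary connected sum $E_1\#\cdots\#E_\ell$.

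It then remains to verify the hypotheses of Theorem~\ref{plumbing}. Take $D_1\to B_1$ as the distinguished bundle in hypothesis~(i); by assumption $B_1$ admits $Ric_k>0$, so we set $k_1=k$. Every other bundle with base of dimension $q$ is some $D_i$ with $i\ge 2$, whose base admits a $k$-core metric by hypothesis, which verifies~(ii). The connectors have base $S^{p+1}$, which carries a $1$-core metric---the round metric, as recalled in the paper---so hypothesis~(iii) is satisfied with $k_2=1$. Applying Theorem~\ref{plumbing} and translating the dimension bookkeeping between the theorem (``base dim $q+1$, fibre dim $p+1$'') and the corollary (``base dim $q$, fibre $S^p$'') then yields $Ric_k>0$ on $\partial W\cong E_1\#\cdots\#E_\ell$ in the claimed range.

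The main obstacle is the topological identification $\partial W\cong E_1\#\cdots\#E_\ell$. While the formula for the boundary of a single plumbing is classical, iterating it over the graph and identifying the resulting central piece (coming from the boundaries of the connectors with $\ell$ fibre-neighbourhood tubes removed) with the standard ``$\ell$-pants'' entering the definition of connected sum requires some care with corner smoothings and the local trivialisations used at each plumbing point. Once this diffeomorphism is in place, the appeal to Theorem~\ref{plumbing} is immediate and the bound on $k$ follows from the corresponding bound in Theorem~\ref{plumbing}.
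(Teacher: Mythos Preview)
Your overall strategy coincides with the paper's: realize $E_1\#\cdots\#E_\ell$ as $\partial W$ for a suitable plumbing and then invoke Theorem~\ref{plumbing}. The verification of hypotheses (i)--(iii) and the translation of the bound on $k$ are also exactly as in the paper.

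The issue is the plumbing graph. With only the connectors you describe---trivial $D^q$-bundles over $S^{p+1}$---the boundary is \emph{not} the ordinary connected sum. For instance, with the chain $\overline{E}_1-\underline{D}^q_{S^{p+1}}-\overline{E}_2$, the middle piece contributes $(S^{p+1}\setminus 2D^{p+1})\times S^{q-1}\cong S^p\times S^{q-1}\times I$ to the boundary, so $\partial W$ is the \emph{fibre} connected sum $(E_1\setminus(S^p\times D^q))\cup_{S^p\times S^{q-1}}(E_2\setminus(S^p\times D^q))$, not $E_1\#E_2$. The paper fixes this by attaching to each connector $\underline{D}^q_{S^{p+1}}$ an additional leaf $\underline{D}^{p+1}_{S^q}$ (the trivial $D^{p+1}$-bundle over $S^q$); with these extra leaves the identification $\partial W\cong E_1\#\cdots\#E_\ell$ holds, and the paper simply cites \cite[Propositions~3.2 and 3.3]{Re2} (see also \cite[Proposition~2.6]{CW} and \cite[Section~5]{Bu2}) for this.

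These extra leaves have base $S^q$, which also carries a $1$-core metric, so they fit into hypothesis~(ii) of Theorem~\ref{plumbing} without changing $k_1=k$, and the computation of the bound $\max\{p+2,\,p+k,\,q+1,\,q\}=\max\{p+2,\,p+k,\,q+1\}$ is unaffected. So your proof becomes correct once you include the $\underline{D}^{p+1}_{S^q}$ leaves in the graph; you were right to flag the topological identification as the crux, and this is precisely where your proposed graph falls short.
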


This paper is laid out as follows. In Section 2 we prove a generalization of the main technical result in \cite{RW1}. The aim is to establish criteria which identify when metrics (of the type under consideration in this paper) have $Ric_k>0.$ In Section 3 we prove that the neck construction from \cite{Pe} actually gives a metric with $Ric_2>0$, and we use this to prove Theorem \ref{k-core_conn_sum}. The remaining results (Theorems \ref{k-core_examples}, \ref{plumbing}, and Corollary \ref{sphere-bundles_conn-sum}) are then established in Section 4.

	\section{Preliminaries}

	Let $(M^n,g)$ be a Riemannian manifold. To characterize the condition $Ric_k>0$ we consider the curvature operator $\mathcal{R}\colon \Lambda^2 TM\to\Lambda^2TM$ defined by
	\[ g(\mathcal{R}(v_1\wedge v_2), v_3\wedge v_4)=g(R(v_1,v_2)v_4,v_3), \]
	where $\Lambda^2 TM$ is equipped with the Riemannian metric which is the natural extension of $g$ to $\Lambda^2 TM$, i.e.\
	\[ g(v_1\wedge v_2,v_3\wedge v_4)=g(v_1,v_3)g(v_2,v_4)-g(v_1,v_4)g(v_2,v_3). \]
	
	We recall the following definitions of \cite{RW1}: For an inner product space $V$ the set $\{v_0\wedge v_1,\dots,v_0\wedge v_k \}\subseteq\Lambda^2 V$, where $(v_0,\dots,v_k)$ is an orthonormal $(k+1)$-frame in $V$, is called a \emph{$k$-chain} with \emph{base} $v_0$. For a linear map $A\colon \Lambda^2 V\to\Lambda^2 V$ and a $k$-chain $\{v_0\wedge v_1,\dots,v_0\wedge v_k \}$ the sum
	\[ \sum_{i=1}^{k}\langle A(v_0\wedge v_i),v_0\wedge v_i\rangle \]
	is the \emph{value} of $A$ on this $k$-chain. Note that $(M,g)$ has $Ric_k>0$ if and only if at every point in $M$ the value of $\mathcal{R}$ on every $k$-chain is positive.
	
	In \cite{RW1} we considered the condition $Ric_k>0$ for doubly warped product metrics. In this case each tangent space splits orthogonally into a direct sum $V_1\oplus V_2\oplus V_3$ such that each subspace $V_i\wedge V_j$ is an eigenspace for $\mathcal{R}$. Below we will be interested in the following more general situation.

	\begin{proposition}\label{algebra}
		Let $(V,\langle\cdot,\cdot\rangle)$ be a finite-dimensional inner product space of dimension $n$ and let $A\colon \Lambda^2 V\to\Lambda^2V$ be a linear self-adjoint map. Suppose that $V$ splits orthogonally as
		\[ V=V_1\oplus V_2\oplus V_3 \]
		so that $V_1$ and $V_2$ are one-dimensional and $A$ is given by
		\begin{align*}
			A(v_1\wedge v_2)=&\lambda_{12}v_1\wedge v_2,\\
			A(v_1\wedge w_1)=&\lambda_{13}v_1\wedge w_1+\tilde{\lambda}v_2\wedge w_1,\\
			A(v_2\wedge w_1)=&\lambda_{23}v_2\wedge w_1+\tilde{\lambda} v_1\wedge w_1,\\
			A(w_1\wedge w_2)=&\lambda_3 w_1\wedge w_2,
		\end{align*}
		for some $\lambda_{12},\lambda_{13},\lambda_{23},\tilde{\lambda},\lambda_3\in\R$, where $v_1$ and $v_2$ are unit vectors in $V_1$ and $V_2$, respectively, and $w_1,w_2\in V_3$. Then for $2\leq k\leq n-3$ the value of $A$ on every $k$-chain is positive if and only if the following inequalities are satisfied:
		\begin{enumerate}
			\item $\lambda_{12}+\frac{1}{2}(k-1)(\lambda_{13}+\lambda_{23}  )>0 $,
			\item $(\lambda_{12}+(k-1)\lambda_{13})(\lambda_{12}+(k-1)\lambda_{23})>(k-1)^2\tilde{\lambda}^2$,
			
			\item $\lambda_{13}\lambda_{23}>\tilde{\lambda}^2$,
			\item $\lambda_{13},\lambda_{23},\lambda_{3}>0$.
		\end{enumerate}
		For $k=n-2,n-1$ these inequalities are still sufficient, but not necessary.
	\end{proposition}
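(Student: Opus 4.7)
The plan is to reduce the statement to the case $\tilde{\lambda} = 0$ by a change of orthonormal basis inside $V_1 \oplus V_2$, after which $A$ is block-diagonal on $V_1\wedge V_2$, $V_1\wedge V_3$, $V_2\wedge V_3$, $\Lambda^2 V_3$; the proposition will then follow from the main technical result of \cite{RW1}, which treats exactly this block-diagonal situation. The key observation behind the reduction is that the orthogonal splitting of the two-plane $V_1 \oplus V_2 \subseteq V$ into one-dimensional subspaces is not canonical: for any $\theta \in \R$ the vectors $\tilde v_1 = \cos\theta\,v_1 + \sin\theta\,v_2$ and $\tilde v_2 = -\sin\theta\,v_1 + \cos\theta\,v_2$ span lines $\tilde V_1, \tilde V_2$ with $\tilde V_1 \oplus \tilde V_2 = V_1 \oplus V_2$, so $V = \tilde V_1 \oplus \tilde V_2 \oplus V_3$ is another decomposition to which the hypotheses apply, and ``$A$ is positive on every $k$-chain'' is insensitive to this choice.

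A direct calculation shows that in the new decomposition $\lambda_{12}$ and $\lambda_3$ are unchanged, while the symmetric $2\times 2$ matrix $M = \bigl(\begin{smallmatrix}\lambda_{13} & \tilde{\lambda} \\ \tilde{\lambda} & \lambda_{23}\end{smallmatrix}\bigr)$ controlling the action of $A$ on $V_1\wedge V_3 \oplus V_2\wedge V_3$ transforms by the congruence $R(\theta)^T M R(\theta)$. I pick $\theta$ to diagonalise $M$, so that the new off-diagonal entry becomes $0$ and the new diagonal entries become the eigenvalues
\[
    \mu_\pm = \tfrac{1}{2}(\lambda_{13}+\lambda_{23}) \pm \sqrt{\tfrac{1}{4}(\lambda_{13}-\lambda_{23})^2 + \tilde{\lambda}^2}.
\]
At this point $A$ has the block-diagonal form of the operator studied in \cite{RW1}, and that reference gives, for $2 \leq k \leq n-3$, that positivity of $A$ on every $k$-chain is equivalent to the five inequalities $\mu_+ > 0$, $\mu_- > 0$, $\lambda_3 > 0$, $\lambda_{12} + (k-1)\mu_+ > 0$, $\lambda_{12} + (k-1)\mu_- > 0$.

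To conclude, I translate these inequalities back using the identities $\mu_+\mu_- = \lambda_{13}\lambda_{23} - \tilde{\lambda}^2$ and $\mu_+ + \mu_- = \lambda_{13}+\lambda_{23}$: positivity of both $\mu_\pm$ is equivalent to (iii) together with $\lambda_{13}, \lambda_{23} > 0$; and the sum and product of $\lambda_{12} + (k-1)\mu_\pm$ equal $2\lambda_{12} + (k-1)(\lambda_{13}+\lambda_{23})$ and $(\lambda_{12}+(k-1)\lambda_{13})(\lambda_{12}+(k-1)\lambda_{23}) - (k-1)^2\tilde{\lambda}^2$ respectively, so positivity of both factors is (i) together with (ii). For $k = n-2, n-1$ one cannot place $k+1$ orthonormal vectors inside the $(n-2)$-dimensional subspace $V_3$, hence no $k$-chain sits entirely in $\Lambda^2 V_3$ and $\lambda_3 > 0$ is no longer forced; sufficiency, however, follows from the same argument verbatim. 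The hard part of the proof is therefore housed in \cite{RW1}; the only new work in this generalisation is the symmetric $2 \times 2$ congruence diagonalising $M$ and the routine translation of inequalities above.
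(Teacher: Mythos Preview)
Your proposal is correct and follows essentially the same route as the paper's proof: both rotate inside $V_1\oplus V_2$ to diagonalise the $2\times2$ block $M=\bigl(\begin{smallmatrix}\lambda_{13}&\tilde\lambda\\\tilde\lambda&\lambda_{23}\end{smallmatrix}\bigr)$, then invoke \cite[Proposition~2.3]{RW1} and translate the resulting inequalities back via the trace and determinant of $M$. The paper writes down an explicit (non-normalised) eigenbasis $v_1'=\mu v_1+v_2$, $v_2'=-v_1+\mu v_2$ rather than your rotation $R(\theta)$, but this is only a cosmetic difference; your eigenvalues $\mu_\pm$ coincide with the paper's $\lambda_{13}',\lambda_{23}'$, and the final algebraic translation is identical.
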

	\begin{proof}
		First note that if $\tilde{\lambda}=0$, then the spaces $V_i\wedge V_j$ are eigenspaces for $A$, so we are in the situation of \cite[Proposition 2.3]{RW1}. Observe that (i)--(iv) in this case now become
		\begin{align*}
			\lambda_{12}+(k-1)\lambda_{13}&>0,\\
			\lambda_{12}+(k-1)\lambda_{23}&>0,\\
			\lambda_{13},\lambda_{23},\lambda_{3}&>0,
		\end{align*} 
		and these are precisely the inequalities appearing in \cite[Proposition 2.3]{RW1} for $k\leq n-3$, and for $k=n-2,n-1$ these inequalities are easily seen to be implied by those appearing in \cite[Proposition 2.3]{RW1}. %That these inequalities are necessary to conclude that the value of $A$ on every $k$-chain is positive can be seen by considering $k$-chains that consist entirely of eigenvectors.
		
		From now on we can therefore assume that $\tilde{\lambda}\neq0$.
		We modify the vectors $v_1$ and $v_2$ as follows. First, Let
		\[ \mu=\frac{\lambda_{13}-\lambda_{23}+\sqrt{(\lambda_{13}-\lambda_{23})^2 +4\tilde{\lambda}^2 }}{2\tilde{\lambda}}. \]
		and define
		\[ v_1'=\mu v_1+v_2,\quad v_2'=- v_1+\mu v_2. \]
		Let $V_1'$ and $V_2'$ be the subspaces generated by $v_1'$ and $v_2'$, respectively, and set $V_3'=V_3$. Then $V_1'$ and $V_2'$ are orthogonal and $V_1'\oplus V_2'=V_1\oplus V_2$. A calculation shows that the spaces $V_i'\wedge V_j'$ are eigenspaces for $A$ with eigenvectors $\lambda_{ij}'$ given by
		\begin{align*}
			\lambda_{12}'&=\lambda_{12},\\
			\lambda_{13}'&=\frac{1}{2}\left( \lambda_{13}+\lambda_{23}+\sqrt{(\lambda_{13}-\lambda_{23})^2+4\tilde{\lambda}^2 } \right)\\
			\lambda_{23}'&=\frac{1}{2}\left( \lambda_{13}+\lambda_{23}-\sqrt{(\lambda_{13}-\lambda_{23})^2+4\tilde{\lambda}^2 } \right)\\
			\lambda_{33}'&=\lambda_3.
		\end{align*}
		By \cite[Proposition 2.3]{RW1}, the value of $A$ on every $k$-chain is positive if and only if the sum of any $k$ non-diagonal elements in each row of the following $(n\times n)$ matrix is positive.
		\[
		\begin{pmatrix}
			0 & \lambda_{12}' & \lambda_{13}' & \cdots & \cdots & \lambda_{13}'\\
			\lambda_{12}' & 0 & \lambda_{23}' & \cdots & \cdots & \lambda_{23}'\\
			\lambda_{13}' & \lambda_{23}' & 0 & \lambda_{33}' & \cdots & \lambda_{33}' \\
			\vdots & \vdots & \lambda_{33}'& \ddots & \ddots & \vdots \\
			\vdots & \vdots & \vdots & \ddots & \ddots & \lambda_{33}' \\
			\lambda_{13}' & \lambda_{23}' & \lambda_{33}' & \cdots  & \lambda_{33}' & 0
		\end{pmatrix}
		\]
		For $2\leq k\leq n-3$ this is equivalent to the following system of inequalities.
		\begin{align*}
			\lambda_{12}'+(k-1)\lambda_{13}'&>0,\quad 
			&\lambda_{12}'+(k-1)\lambda_{23}'&>0,\\
			\lambda_{13}'+\lambda_{23}'+(k-2)\lambda_{33}'&>0,\quad
			&\lambda_{13}'+(k-1)\lambda_{33}'&>0,\\
			\lambda_{23}'+(k-1)\lambda_{33}'&>0,\quad
			&\lambda_{13}',\lambda_{23}',\lambda_{33}'&>0. 
		\end{align*}
		For $k=n-2,n-1$ these inequalities are still sufficient, but not all are necessary.
	
		Note that the inequalities $\lambda_{13}'+\lambda_{23}'+(k-2)\lambda_{33}'>0$, $\lambda_{13}'+(k-1)\lambda_{33}'>0$ and $\lambda_{23}'+(k-1)\lambda_{33}'>0$ are superfluous. Hence, we arrive at the following system of inequalities.
		\begin{align}
			\label{EQ1}\lambda_{12}+\frac{1}{2}(k-1)\left( \lambda_{13}+\lambda_{23}+\sqrt{(\lambda_{13}-\lambda_{23})^2+4\tilde{\lambda}^2 } \right)&>0,\\
			\label{EQ2}\lambda_{12}+\frac{1}{2}(k-1)\left( \lambda_{13}+\lambda_{23}-\sqrt{(\lambda_{13}-\lambda_{23})^2+4\tilde{\lambda}^2 } \right)&>0,\\
			\label{EQ6}\lambda_{13}+\lambda_{23}+\sqrt{(\lambda_{13}-\lambda_{23})^2+4\tilde{\lambda}^2 }&>0,\\
			\label{EQ7}\lambda_{13}+\lambda_{23}-\sqrt{(\lambda_{13}-\lambda_{23})^2+4\tilde{\lambda}^2 }&>0,\\
			\label{EQ8}\lambda_3&>0.
		\end{align}
		
		First note that \eqref{EQ2} implies \eqref{EQ1}, and \eqref{EQ7} implies \eqref{EQ6}. Hence, we are left with \eqref{EQ2}, \eqref{EQ7} and \eqref{EQ8}. Next, observe that \eqref{EQ7} implies $\lambda_{13}+\lambda_{23}>0$ and is therefore equivalent to
		\[ \lambda_{13}\lambda_{23}>\tilde{\lambda}^2. \]
		In particular, $\lambda_{13}\lambda_{23}>0$. This observation, together with $\lambda_{13}+\lambda_{23}>0$, is equivalent to
		\[\lambda_{13},\lambda_{23}>0.\]
		Hence, \eqref{EQ7} is equivalent to
		\[ \lambda_{13},\lambda_{23}>0,\quad \lambda_{13}\lambda_{23}>\tilde{\lambda}^2. \]
		Finally, \eqref{EQ2} is equivalent to $(i)$ and $(ii)$, since it is equivalent to
		\[ \lambda_{12}+\frac{1}{2}(k-1)(\lambda_{13}+\lambda_{23})>0\]
		and
		\[\left(\lambda_{12}+\frac{1}{2}(k-1)(\lambda_{13}+\lambda_{23}) \right)^2>\frac{(k-1)^2}{4}\left((\lambda_{13}-\lambda_{23})^2+4\tilde{\lambda}^2\right). \]
		A calculation now shows that the second inequality is equivalent to $(ii)$.
		
	\end{proof}
	
		\begin{remark}
			By adapting the arguments in the proof of Proposition \ref{algebra}, one can also obtain equivalent characterizations in the cases $k=1,n-2,n-1$. We omit this as it is not needed in this article.
		\end{remark}

\section{Perelman's neck construction}

In this section we prove the following result, which is the main ingredient in the proof of Theorem \ref{k-core_conn_sum}.

\begin{theorem}\label{docking_station}
	For any $\nu>0$ sufficiently small, $\ell\in\N$, $n\geq 3$ and all $k\geq 2$ there exists a metric of $Ric_k>0$ on $S^n\setminus \sqcup_{\ell}(D^n)^\circ$ such that the induced metric on each boundary component is the round metric of radius one and the principal curvatures are all given by $-\nu$.
\end{theorem}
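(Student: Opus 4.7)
My plan is to construct the docking station as a concatenation of $\ell$ ``necks,'' one for each prescribed boundary component, joined to a central ``core'' which absorbs the topology of $S^n\setminus\sqcup_\ell(D^n)^\circ$, and then to verify $Ric_2>0$ on each piece separately with the interface handled by a matching of boundary data. Throughout I would exploit the fact that $Ric_2>0$ implies $Ric_k>0$ for all $k\geq 2$: for any orthonormal $(k+1)$-frame $(v,e_1,\dots,e_k)$,
\[
(k-1)\sum_{i=1}^k K(v,e_i)=\sum_{i<j}\bigl(K(v,e_i)+K(v,e_j)\bigr)>0,
\]
each pair on the right being positive by hypothesis. Hence only the case $k=2$ needs to be addressed.

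Each neck would be modeled on $[0,T]\times S^{n-1}$ with a singly warped product metric $dt^2+\phi(t)^2 g_{S^{n-1}}$, where $\phi$ is smooth with $\phi(0)=1$, $\phi'(0)=\nu$, and with $\phi''<0$ and $|\phi'|<1$ throughout. The $t=0$ end then realises the required round metric of radius one and principal curvatures $-\nu$ with respect to the outward normal $-\partial_t$ (pointing into the removed disc), while the standard warped-product formulas give $K_{\mathrm{rad}}=-\phi''/\phi>0$ and $K_{\mathrm{tan}}=(1-(\phi')^2)/\phi^2>0$; in particular the neck carries positive sectional curvature.

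The central core would be built using the $O(n-1)\times O(2)$-symmetry mentioned in the remark following the theorem: I model it as a subset of $S^n\subseteq\R^{n+1}=\R^{n-1}\oplus\R^2$ invariant under $O(n-1)\times O(2)$, endowed with a doubly warped product $ds^2+h_1(s)^2 g_{S^{n-2}}+h_2(s)^2 g_{S^1}$ on a one-dimensional slice of the orbit space, with the $\ell$ boundary components placed at equally spaced angles in $S^1$ respecting the $\Z_\ell\subseteq O(2)$ symmetry. Since no off-diagonal term appears in this setup, the conditions of Proposition~\ref{algebra} with $\tilde\lambda=0$ reduce to the standard doubly warped product inequalities on $h_1,h_2$ and their derivatives, which are open conditions satisfiable by small perturbations of the round-sphere warping profile.

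The neck-core interface is a copy of $S^{n-1}$ on either side; the warping functions are chosen so that induced metrics and second fundamental forms agree to first order, with any residual $C^1$-discontinuity absorbed by the gluing theorem of \cite{RW3} without loss of $Ric_2>0$. The main obstacle is the core construction itself: arranging the doubly warped functions so that $Ric_2>0$ holds while simultaneously meeting the matching data from all $\ell$ necks and extending smoothly across the fiber-collapse loci. Smallness of $\nu$ is crucial here---as $\nu\to 0$ the neck boundary becomes almost totally geodesic, so the matching constraints become mild and one can take the core close to the round sphere profile, where $Ric_2>0$ holds with ample room to spare.
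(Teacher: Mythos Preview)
Your decomposition pushes all of the difficulty into the ``core'' and then does not confront it. The singly-warped neck $dt^2+\phi(t)^2 g_{S^{n-1}}$ is harmless, but it forces the core--neck interface to be a \emph{round} $S^{n-1}$. Thus your core must itself be a metric of $Ric_2>0$ on $S^n\setminus\sqcup_\ell (D^n)^\circ$ with $\ell$ round boundary spheres and controlled second fundamental form---which is exactly the theorem you are trying to prove. Your suggestion to take the core close to the round profile fails for $\ell\ge 3$: if each boundary $S^{n-1}$ is to be round of radius $1$ with principal curvatures $-\nu$, then in a round ambient sphere of radius $N$ this forces $N=1/\sqrt{1-\nu^2}\approx 1$ and geodesic ball radius $\approx\pi/2$; you cannot pack three or more such balls disjointly. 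If instead the core is a genuinely non-round doubly warped product, then geodesic spheres centred on the $S^1$ factor are \emph{not} intrinsically round, so they will not match your singly-warped neck isometrically.

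This is precisely why the paper (following Perelman) organises the construction in the opposite direction. The ambient space of Proposition~\ref{ambient_space} has positive sectional curvature and accommodates $\ell$ disjoint balls, but its boundary components carry a singly-warped (non-round) metric $dt^2+B(t)^2 ds_{n-2}^2$. The neck of Proposition~\ref{neck} then transitions from this non-round boundary to a round one; it is of the form $dt^2+A(t,x)^2 dx^2+B(t,x)^2 ds_{n-2}^2$, and the $x$-dependence of $A$ and $B$ is unavoidable because one end is round and the other is not. This is exactly where the off-diagonal term $\tilde\lambda$ in Proposition~\ref{algebra} enters, and where $Ric_2>0$ (rather than positive sectional curvature) is the achievable condition. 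Your proposal invokes Proposition~\ref{algebra} only in the trivial case $\tilde\lambda=0$, and so misses the one place where the analysis has real content.
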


The construction of the metric in Theorem \ref{docking_station} follows that of \cite{Pe} and consists of two parts: First, the \emph{ambient space}, which is a metric of positive sectional curvature on $S^n\setminus \sqcup_{\ell}(D^n)^\circ$, where the metric on each boundary component is a warped product metric whose 'waist' can be chosen arbitrarily small and with principal curvatures all at least -1. It is already established in \cite{Pe} that the metric has positive sectional curvature. Second, the \emph{neck}, which is a metric on $S^{n-1}\times[0,1]$ connecting the metrics on the boundary components of the ambient space to round metrics with constant and arbitrarily small second fundamental form. This metric on the neck is shown to have positive Ricci curvature in \cite{Pe} and we show below that it has in fact $Ric_2>0$:

\begin{proposition}\label{neck}
	Let $g$ be a metric on $S^n$, $n\geq 2$, of the form
	\[ g=dt^2+B^2(t)ds_{n-1}, \]
	where $t\in[0,\pi R]$, and we set $r=\max_t B(t)$. Assume that $g$ has sectional curvatures greater than $1$ and suppose that $r<R^2$. Let $\rho\in(r^{\frac{1}{2}},R)$. Then there exists a metric of $Ric_2>0$ on $S^n\times [0,1]$ such that
	\begin{enumerate}
		\item The induced metric on $S^n\times\{0\}$ is the round metric of radius $\frac{\rho}{\lambda}$ and satisfies $\II\equiv -\lambda$ for some $\lambda>0$,
		\item The induced metric on $S^n\times\{1\}$ is isometric to $g$ and satisfies $\II>1$.
	\end{enumerate}
\end{proposition}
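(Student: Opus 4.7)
The strategy is to verify that the neck metric constructed in \cite{Pe} --- already known there to carry positive Ricci curvature --- in fact satisfies the stronger condition $Ric_2>0$. The algebraic input is Proposition \ref{algebra} applied with $k=2$.

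First I would write out the neck metric from Perelman's construction explicitly. It lives on $S^n\times[0,1]$ and, in suitable coordinates $(s,t,\theta)\in[0,1]\times[0,\pi]\times S^{n-1}$, has the doubly-warped form
\[
\bar g = ds^2 + L(s)^2\,dt^2 + \phi(s,t)^2\,ds_{n-1},
\]
where the warping functions $L$ and $\phi$ are chosen so that at $s=1$ the metric agrees (after a suitable reparametrisation of $t$) with $g$, and at $s=0$ reduces to a round metric of radius $\rho/\lambda$. The hypotheses $r<R^2$ and $\rho\in(r^{1/2},R)$ are precisely what allow such an interpolation to be made while keeping all needed curvature estimates positive.

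Next, at each point of the neck I fix the orthogonal splitting $T_pM=V_1\oplus V_2\oplus V_3$ with $V_1=\R\partial_s$, $V_2=\R\partial_t$ and $V_3$ tangent to the $S^{n-1}$-fibre. A direct computation of the Riemann tensor for a metric of the above shape shows that in this splitting the curvature operator has precisely the block form required by Proposition \ref{algebra}: the planes $V_1\wedge V_2$, $V_1\wedge V_3$, $V_2\wedge V_3$ and the $V_3\wedge V_3$ block are preserved with eigenvalues $\lambda_{12},\lambda_{13},\lambda_{23},\lambda_3$, all expressible through $L$, $\phi$ and their $s$- and $t$-derivatives, while the only off-diagonal coupling is between $V_1\wedge V_3$ and $V_2\wedge V_3$, with coefficient $\tilde{\lambda}$ produced by the mixed partial $\phi_{st}$.

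With this algebraic setup, $Ric_2>0$ reduces pointwise to verifying the four inequalities (i)--(iv) of Proposition \ref{algebra} with $k=2$. Item (iv) is the positivity of the relevant coordinate sectional curvatures and (i) is implied by the positive Ricci estimate from \cite{Pe}. The main obstacle is expected to be (ii) and (iii), which are genuinely stronger than positive Ricci and which control the off-diagonal term: (iii) $\lambda_{13}\lambda_{23}>\tilde{\lambda}^2$ is a Cauchy--Schwarz-type bound of $\phi_{st}^2$ by the product of the pure second derivatives $\phi_{ss}$ and $\phi_{tt}$ (up to lower-order terms), while (ii) is the corresponding reinforcement of the $\lambda_{12}$ contribution. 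I would establish both by a careful re-examination of Perelman's explicit choices for $L$ and $\phi$, exploiting the scale gap $\rho\in(r^{1/2},R)$ together with $r<R^2$ to ensure that $\phi_{st}$ stays small relative to the pure second derivatives.

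The boundary conditions asserted in (i) and (ii) of the proposition then follow from the construction by direct calculation: at $s=0$ the metric becomes a round $S^n$ of radius $\rho/\lambda$ whose outward second fundamental form equals $-\lambda$, and at $s=1$ the metric agrees with $g$, so $\II>1$ is read off from the interpolation data together with the hypothesis that the sectional curvatures of $g$ exceed $1$.
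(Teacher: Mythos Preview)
Your outline matches the paper's approach: one checks that Perelman's explicit neck metric has curvature operator of the form in Proposition~\ref{algebra} and then verifies inequalities (i)--(iv) with $k=2$. A few points need sharpening, though, and one of them is where the real content lies.

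First, a minor correction: in Perelman's construction the coefficient of the $S^1$-factor depends on \emph{both} variables, not just the radial one. In the paper's notation the metric is $dt^2+A(t,x)^2dx^2+B(t,x)^2 ds_{n-1}^2$ with $A(t,x)=tb(t)(1-\eta(x)+\eta(x)a(t))$; the dependence on $x$ through $\eta$ is essential and feeds into $\lambda_{12}$.

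Second, your assessment of which inequalities are hard is slightly off. The scale hierarchy in Perelman's estimates gives $\lambda_{23},\lambda_3\gtrsim t^{-2}$ while $|\lambda_{12}|,|\lambda_{13}|,|\tilde\lambda|\lesssim \ln(t_0)\,t^{-2}\ln(t)^{-2}$. From this, (i) and (iii) follow immediately (since $\lambda_{13}\lambda_{23}$ is of order $t^{-4}\ln(t)^{-2}$ while $\tilde\lambda^2$ is of order $t^{-4}\ln(t)^{-4}$; no Cauchy--Schwarz is involved). For (iv), note that $\lambda_{13}>0$ does \emph{not} follow from Perelman's estimates alone (they only bound $|\lambda_{13}|$); one must compute $\lambda_{13}=-\bigl(\tfrac{b''}{b}+\tfrac{2b'}{tb}\bigr)$ directly and check its sign.

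The genuinely new step, which you wave at but do not identify, is establishing the factor $\lambda_{12}+\lambda_{13}>0$ in (ii). Since both terms are of the same small order and $\lambda_{12}$ can be negative, this requires computing the sum explicitly. The paper shows
\[
\lambda_{12}+\lambda_{13}=\Bigl(\tfrac{\alpha\eta a}{1-\eta+\eta a}-2\Bigr)\Bigl(\bigl(\tfrac{b'}{b}\bigr)'+\tfrac{2b'}{tb}\Bigr)+\text{(lower order)},
\]
and the first factor is negative precisely because the parameter $\alpha=\tfrac{(1+\delta)}{(1-\varepsilon)}\tfrac{\ln a_\infty}{\ln(\rho/r)}$ satisfies $\alpha<2$. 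This inequality $\alpha<2$ is \emph{exactly} where the hypothesis $\rho>r^{1/2}$ enters (together with $a_\infty<1/r$, which comes from $\sec_g>1$). Your proposal should name this mechanism rather than just gesturing at the scale gap.
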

The metric we will construct in the proof of Proposition \ref{neck} is of the form $dt^2+A(t,x)^2dx^2+B(t,x)^2ds_m^2$, where $dx^2$ denotes the standard metric on $S^1$. We first compute the curvatures of such a metric.

\begin{lemma}\label{neck_curv}
	Let $t_0<t_1$ and denote by $dt^2$ the standard metric on $[t_0,t_1]$, and by $dx^2$ the standard metric on $S^1$. Let $A,B\colon[t_0,t_1]\times S^1\to\R_{>0} $ be smooth positive functions and define the metric $g$ on $[t_0,t_1]\times S^1\times S^m$ by
	\[g=dt^2+A(t,x)^2dx^2+B(t,x)^2ds_m^2.  \]
	Let $v_1,v_2$ denote vectors tangent to $S^m$. Then the curvature tensor of $g$ is given by
	\begin{align*}
		\mathcal{R}(\partial_t\wedge\partial_x)=&-\frac{A_{tt}}{A}\partial_t\wedge\partial_x,\\
		\mathcal{R}(\partial_t\wedge v_1)=&-\frac{B_{tt}}{B}\partial_t\wedge v_1+\left(-\frac{B_{xt}}{A^2 B}+\frac{A_t B_x}{A^3 B}\right)\partial_x\wedge v_1,\\
		\mathcal{R}(\partial_x\wedge v_1)=& \left(-\frac{B_{xt}}{B}+\frac{A_t B_x}{AB}\right)\partial_t\wedge v_1+\left(-\frac{A_t B_t}{AB}-\frac{B_{xx}}{A^2 B}+\frac{A_x B_x}{A^3 B}\right)\partial_x\wedge v_1,\\
		\mathcal{R}(v_1\wedge v_2)=&\left(\frac{1-B_t^2}{B^2}-\frac{B_x^2}{A^2B^2}\right)v_1\wedge v_2.
	\end{align*}
\end{lemma}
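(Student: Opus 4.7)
The plan is to view $g$ as a warped product over the two-dimensional base $N=[t_0,t_1]\times S^1$ equipped with $g_N=dt^2+A(t,x)^2\,dx^2$, so that
\[ g = g_N + B^2\, g_{S^m} \]
is a warped product with fiber $(S^m, ds_m^2)$ and warping function $B\colon N\to\R_{>0}$ (depending on both base coordinates). The standard warped product curvature formulas in the style of O'Neill then express the Riemann tensor of $g$ in terms of three ingredients on $N$: the Gaussian curvature of $g_N$, the Hessian $\mathrm{Hess}^N_B$, and the squared gradient norm $|\nabla^N B|^2$.

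I would compute each ingredient directly. The 2-dimensional metric $g_N$ has Gaussian curvature $K_N=-A_{tt}/A$, which gives the formula for $\mathcal{R}(\partial_t\wedge\partial_x)$ immediately. The only nonzero Christoffel symbols of $g_N$ are $\Gamma^t_{xx}=-AA_t$, $\Gamma^x_{tx}=A_t/A$, and $\Gamma^x_{xx}=A_x/A$, from which
\begin{align*}
\mathrm{Hess}^N_B(\partial_t,\partial_t) &= B_{tt}, \\
\mathrm{Hess}^N_B(\partial_t,\partial_x) &= B_{xt}-\tfrac{A_t}{A}B_x, \\
\mathrm{Hess}^N_B(\partial_x,\partial_x) &= B_{xx}-\tfrac{A_x}{A}B_x+AA_tB_t,
\end{align*}
and $|\nabla^N B|^2 = B_t^2 + B_x^2/A^2$. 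Inserting these into the warped product formulas, the ``diagonal'' coefficients of $\mathcal{R}(\partial_t\wedge v_1)$ and $\mathcal{R}(\partial_x\wedge v_1)$ arise from $-\mathrm{Hess}^N_B(X,X)/B$ (with $X=\partial_t$ or $X=\partial_x$, suitably normalized), while the vertical-vertical term $\mathcal{R}(v_1\wedge v_2)$ has coefficient $(1-|\nabla^N B|^2)/B^2$. The off-diagonal Hessian entry $\mathrm{Hess}^N_B(\partial_t,\partial_x)$ produces exactly the cross-terms coupling $\partial_t\wedge v_1$ and $\partial_x\wedge v_1$ in the middle two formulas.

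The main obstacle is bookkeeping: since $\partial_x$ is not a unit vector ($|\partial_x|_g=A$) and $v_i$ is not normalized in the fiber metric, converting the orthonormal-frame form of the warped product formulas back into the coordinate-basis expressions in the statement introduces compensating factors of $A$. This explains why the $\partial_x\wedge v_1$ coefficient in $\mathcal{R}(\partial_t\wedge v_1)$ carries $1/A^2$ while the reciprocal cross-coefficient in $\mathcal{R}(\partial_x\wedge v_1)$ carries only $1/A$; self-adjointness of $\mathcal{R}$ provides a useful consistency check. An essentially equivalent alternative would be to work with the orthonormal coframe $\{dt,\ A\,dx,\ B\,\theta^i\}$ (where $\theta^i$ is an orthonormal coframe on $S^m$) and compute the curvature forms via Cartan's structure equations, which avoids the warped product black box at the cost of a longer direct computation.
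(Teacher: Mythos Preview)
Your approach is correct and slightly more structured than the paper's. The paper proceeds by brute force: it writes down the Levi--Civita connection of the full metric $g$ via the Koszul formula (listing all the covariant derivatives $\nabla_{\partial_t}\partial_t$, $\nabla_{\partial_t}\partial_x$, $\nabla_{\partial_x}v_1$, $\nabla_{v_1}v_2$, etc.) and then computes the curvature tensor directly from these. You instead exploit the fact that $g=g_N+B^2\,ds_m^2$ is a warped product over the two-dimensional base $(N,g_N)$, so that the O'Neill formulas reduce everything to the Gaussian curvature $-A_{tt}/A$ of $g_N$, the Hessian $\mathrm{Hess}^N B$, and $|\nabla^N B|^2$. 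The two routes are of course equivalent---the warped product formulas are themselves derived by the Koszul computation---but your organization makes the block structure of $\mathcal{R}$ (and in particular the absence of components such as $\partial_t\wedge\partial_x$ in $\mathcal{R}(\partial_t\wedge v_1)$, or $v_1\wedge v_2$ in $\mathcal{R}(\partial_x\wedge v_1)$) transparent from the general warped product identities $R(X,Y)V=0$ and $R(V,W)X=0$, whereas in the paper's direct computation these vanishings emerge only after cancellation. Your remark about the asymmetric powers of $A$ in the two cross-terms, explained by $|\partial_x|_g=A$ together with self-adjointness of $\mathcal{R}$, is exactly the right bookkeeping check.
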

\begin{proof}
	By using the Koszul formula one easily verifies that the Levi-Civita connection of $g$ is given by
	\begin{align*}
		\nabla_{\partial_t}\partial_t =& 0,\\
		\nabla_{\partial_t}\partial_x=&\nabla_{\partial_x}\partial_t=\frac{A_t}{A}\partial_x,\\
		\nabla_{\partial_t}v_1=&\nabla_{v_1}\partial_t = \frac{B_t}{B}v_1,\\
		\nabla_{\partial_x}\partial_x=& -AA_t\partial_t+\frac{A_x}{A}\partial_x,\\
		\nabla_{\partial_x}v_1=&\nabla_{v_1}\partial_x=\frac{B_x}{B}v_1,\\
		\nabla_{v_1} v_2=& -ds^2_m( v_1,v_2)\left(BB_t\partial_t+\frac{BB_x}{A^2}\partial_x\right)+\nabla^{S^m}_{v_1}v_2.
	\end{align*}
	From this one can now calculate the full curvature tensor.
\end{proof}

\begin{proof}[Proof of Proposition \ref{neck}]
	We use the same metric as constructed in \cite[Section 2]{Pe}. This metric is constructed as follows.
	
	We rewrite the metric $g$ as
	\[g=r^2\cos^2(x)ds_{n-1}^2+A^2(x)dx^2, \]
	$x\in[-\frac{\pi}{2},\frac{\pi}{2}]$, where $A$ satisfies $A(\pm\frac{\pi}{2})=r$, $A'(\pm\frac{\pi}{2})=0$. Then, since
	\[ \int_{-\frac{\pi}{2}}^{\frac{\pi}{2}}A(x)dx=\pi R, \]		
	there exists $x\in[-\frac{\pi}{2},\frac{\pi}{2}]$ with $A(x)\geq  R\, (>r),$ (note that $R<1$ by the theorem of Bonnet--Myers). Hence, we can rewrite $A$ as
	\[ A(x)=r(1-\eta(x)+\eta(x)a_{\infty}), \]
	where $\eta$ is a function satisfying $\max_x\eta(x)=1$, $\eta(\pm\frac{\pi}{2})=0$ and $\eta'(\pm\frac{\pi}{2})=0$, and $a_\infty\in\R$ with $a_\infty\geq\frac{R}{r}$.
	
	For $t_0<t_\infty$ we define the metric $g_{t_0,t_\infty}$ on $S^n\times[t_0,t_\infty]$ by
	\[ g_{t_0,t_\infty}=dt^2+A^2(t,x)dx^2+B^2(t,x)ds_{n-1}^2, \]
	where
	\[ B(t,x)=tb(t)\cos(x),\quad A(t,x)=tb(t)(1-\eta(x)+\eta(x)a(t)) \]
	and $a,b$ are functions satisfying $a(t_0)=1$, $a'(t_0)=0$, $b(t_0)=\rho$, $b'(t_0)=0$, $a(t_\infty)=a_\infty>1$ and $b(t_\infty)>r$. This metric will later be rescaled by $\frac{r}{t_\infty b(t_\infty)}$ to satisfy the required properties.
	
	Using Lemma \ref{neck_curv} we see that the curvature operator $\mathcal{R}_{g_{t_0,t_\infty}}$ of this metric has the form of the map $A$ in Proposition \ref{algebra} with
	\begin{align*}
		\lambda_{12}&=-\frac{A_{tt}}{A}=K(\partial_t\wedge\partial_x),\\
		\lambda_{13}&=-\frac{B_{tt}}{B}=K(\partial_t\wedge v),\\
		\tilde{\lambda}&=-\frac{B_{xt}}{A^2B}+\frac{A_t B_x}{A^3 B}=\frac{1}{n-1}Ric(\partial_t,\partial_x),\\
		\lambda_{23}&=-\frac{A_t B_t}{AB}-\frac{B_{xx}}{A^2 B}+\frac{A_xB_x}{A^3 B}=K(\partial_x\wedge v),\\
		\lambda_3&=\frac{1-B_t^2}{B^2}-\frac{B_x^2}{A^2B^2}=K(v_1\wedge v_2).
	\end{align*}
	Here $v,v_1,v_2$ are tangent to $S^n$.
	
	The functions $a$ and $b$ are now explicitly defined by
	\begin{align*}
		\frac{b'}{b} =& -\frac{\beta(t-t_0)}{2t_0^2\ln(2t_0)},&&t_0\leq t\leq 2t_0,\\
		\frac{b'}{b} =& -\frac{\beta \ln(2t_0)}{t\ln(t)^2},&&t\geq 2t_0,\\
		\frac{a'}{a} =& -\alpha\frac{b'}{b},&&t\geq t_0.
	\end{align*}
	
	The constants $\alpha$ and $\beta$ are defined by
	\begin{align*}
		\beta=& (1-\varepsilon)\frac{\ln(\rho)-\ln(r)}{1+\frac{1}{4\ln(2t_0)}},\\
		\alpha=& \frac{(1+\delta)}{\beta}\frac{\ln(a_\infty)}{1+\frac{1}{4\ln(2t_0)}} = \frac{(1+\delta)}{(1-\varepsilon)}\frac{\ln(a_\infty)}{\ln(\rho/r)}
	\end{align*}
	for some $\epsilon,\delta>0$ small. These values imply that $\int_{t_0}^\infty b'/b=(1-\epsilon)(\ln r-\ln \rho)$ and $\int^\infty_{t_0} a'/a=(1+\delta)\ln a_\infty.$ 
	
	Similarly as in \cite{Pe} we estimate $\alpha$ as follows: At a maximum point of $\eta$ we have $\eta(x)=1$ and $\eta'(x)\tan(x)=0$. Hence, the sectional curvatures of $g$ at this point satisfy (e.g.\ by applying Lemma \ref{neck_curv})
	\[K_g(\partial_x\wedge v)=\frac{1}{A(x)^2}\left(1-\frac{\sin(x)A'(x)}{\cos(x)A(x)}\right)=\frac{1}{r^2a_\infty^2}.  \]
	Since $K_g>1$, it follows that $a_\infty<1/r$. Thus,
	\[ \ln(a_\infty)<\ln\left( \frac{1}{r} \right)<\ln\left(\frac{1}{r}\frac{\rho^2}{r}\right)=2\ln\left(\frac{\rho}{r} \right). \]
	Hence, $\ln(a_\infty)/\ln(\rho/r)<2$.
	
	We also have $a_\infty\geq R/r>\rho/r$, so that $\ln(a_\infty)>\ln(\rho/r)$. Hence, for $\epsilon$ and $\delta$ sufficiently small, we have $\alpha\in(1,2)$.
	
	By choosing $\epsilon$ smaller if necessary, we can assume that $(\rho/r)^\epsilon g$ still has sectional curvatures at least 1.
	The following estimates are now established in \cite[Section 2]{Pe} for $t_0$ sufficiently large (see also \cite[Lemma 2.6, Corollary C.2.9 and Corollary C.3.3]{Bu1}).
	\begin{align*}
		\lambda_{23},\lambda_{3}&\geq \frac{c_1}{t^2},\\
		|\lambda_{12}|,|\lambda_{13}|,|\tilde{\lambda}|&\leq \frac{c_2\ln(t_0)}{t^2\ln(t)^2}
	\end{align*}
	for some $c_1,c_2>0$. To estimate $\lambda_{13}$ a calculation now shows that
	\[ \lambda_{13}=-\left(\frac{b''}{b}+\frac{2b'}{tb}\right)\geq \frac{c_3\ln(t_0)}{t^2\ln(t)^2} \]
	for some $c_3>0$.
	
	By Proposition \ref{algebra} we need to satisfy the following inequalities:
	\begin{align}
		\label{required_ineq1}\lambda_{12}+\frac{1}{2}(\lambda_{13}+\lambda_{23})&>0,\\
		\label{required_ineq2}(\lambda_{12}+\lambda_{13})(\lambda_{12}+\lambda_{23})&>\tilde{\lambda}^2,\\
		\label{required_ineq3}\lambda_{13}\lambda_{23}&>\tilde{\lambda}^2,\\
		\label{required_ineq4}\lambda_{13},\lambda_{23},\lambda_{3}&>0.
	\end{align}
	From the above estimates it follows directly that \eqref{required_ineq1}, \eqref{required_ineq3} and \eqref{required_ineq4} are satisfied for $t_0$ sufficiently large. For \eqref{required_ineq2} we show that
	\[ \lambda_{12}+\lambda_{13}>\frac{c_4\ln(t_0)}{t^2\ln(t)^2} \]
	for some $c_4>0$, from which \eqref{required_ineq2} follows. We calculate
	\begin{align*}
		\lambda_{12}+\lambda_{13}&=\left( \frac{\alpha\eta a}{1-\eta+\eta a}-2 \right)\left( \left(\frac{b'}{b}\right)'+\frac{2b'}{tb} \right)-2\left(\frac{b'}{b}\right)^2-\frac{\eta a}{1-\eta+\eta a}\left( 2\frac{a'b'}{ab}+\left( \frac{a'}{a} \right)^2 \right).
	\end{align*}
	Similarly as in \cite[end of p.\ 161]{Pe} we see that, since $\alpha<2$ and $\eta\leq 1$, the first factor in the first summand is negative and uniformly bounded from above. Hence, the first summand is bounded from below by $\frac{c_5\ln(t_0)}{t^2\ln(t)^2}$ for some $c_5>0$ and the absolute value of the remaining terms is bounded from above by $\frac{c_6\ln(t_0)}{t^2\ln(t)^4}$ for some $c_6>0$. It follows that the required estimate holds for $t_0$ sufficiently large. Thus, the metric has $Ric_2>0$ for $t_0$ sufficiently large.
	
	Note that $\delta$ can still be chosen freely (which then determines $t_\infty$ via $a(t_\infty)=a_\infty$). This is now done as in \cite{Pe} to ensure that the required conditions on the principal curvatures are satisfied.
\end{proof}

We can now give the proof of Theorems \ref{docking_station} and \ref{k-core_conn_sum}. For this, we recall the following gluing theorem which was established in \cite{RW3}. 
\begin{theorem}[{\cite[Theorem A]{RW3}}]\label{gluing}
	Let $(M_1^n, h_1)$ and $(M_2^n, h_2)$ be Riemannian manifolds of $Ric_k>0$ for some $1\le k\le n-1$ with compact boundaries, and let $\phi:(\partial M_1, h_1|_{\partial M_1})\rightarrow (\partial M_2, h_2|_{\partial M_2} )$ be an isometry. If the sum of second fundamental forms $\II_{\partial M_1}+\phi^*\II_{\partial M_2}$ is positive semi-definite, then $M_1\cup_{\phi} M_2$ admits a smooth metric of $Ric_k>0$ which coincides with the 
	$C^{0}$-metric $h=h_1\cup_{\phi} h_2$ outside an arbitrarily small neighbourhood of the gluing area.
\end{theorem}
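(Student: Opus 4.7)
The plan is to localize the problem to an arbitrarily thin collar of the gluing surface and carry out a bending construction there. First I would choose $\epsilon>0$ small enough that both $h_1$ and $h_2$ admit Fermi coordinates in $\epsilon$-collars of their boundaries, so that $h_i = dt^2 + g_t^{(i)}$ on $[0,\epsilon)\times\partial M_i$. Using the isometry $\phi$ and reversing $t$ on the $M_2$ side, I would identify the two collars and write the glued $C^0$ metric as $h = dt^2 + G_t$ on $(-\epsilon,\epsilon)\times\Sigma$, where $\Sigma\cong\partial M_1\cong\partial M_2$, with $G_t = g_t^{(1)}$ for $t\geq 0$ and $G_t = g_{-t}^{(2)}$ for $t\leq 0$. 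The hypothesis on second fundamental forms is equivalent to
\[ \tfrac{1}{2}\bigl(\partial_t G_t|_{t=0^-} - \partial_t G_t|_{t=0^+}\bigr) = \II_{\partial M_1} + \phi^*\II_{\partial M_2} \geq 0, \]
so $t\mapsto G_t$ fails $C^1$-regularity only by a positive semi-definite jump.

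Next, I would smooth this $C^1$-corner via a bent-double construction. On a small neighbourhood $(-\delta,\delta)_s \times (-\eta,\eta)_\theta \times \Sigma$ of the gluing area, replace $h$ by a metric of the form
\[ \tilde h = ds^2 + A(s,\theta)^2\, d\theta^2 + G_{\tau(s,\theta)}, \]
where $\tau(s,\theta)$ is a smooth function agreeing with $t$ on the $M_1$-side of the bending region and with $-t$ on the $M_2$-side, with the corner at $s=0$ rounded through a small angle over a thin $s$-interval, and $A(s,\theta)$ is a positive warping function equal to $1$ outside the bending region. Outside $|s|<\delta$ the modification returns the original $h_1$ and $h_2$, yielding the locality assertion of the theorem.

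The technical core is verifying $Ric_k(\tilde h)>0$ on the bending region. The tangent space there decomposes orthogonally as $\an{\partial_s}\oplus\an{\partial_\theta}\oplus T\Sigma$, and a direct computation (generalizing Lemma \ref{neck_curv}) puts the curvature operator into precisely the block shape analysed in Proposition \ref{algebra}. I would then verify inequalities (i)--(iv) of that proposition: items (iii) and (iv), being strict and hence open, follow from the analogous strict positivity of $Ric_k(h_i)$ once the bending parameter region is taken sufficiently small; items (i) and (ii) become inequalities whose leading-order contribution to $\lambda_{13}+\lambda_{23}$ is a positive multiple of (the relevant component of) $\II_{\partial M_1}+\phi^*\II_{\partial M_2}$, nonnegative by hypothesis, plus a strictly positive interior contribution coming from $Ric_k(h_i)>0$.

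The main obstacle is the case where $\II_{\partial M_1}+\phi^*\II_{\partial M_2}$ has nontrivial kernel: along kernel directions the bending contribution to $\lambda_{13}+\lambda_{23}$ is merely nonnegative rather than strictly positive, so the strict $Ric_k>0$ of $h_1,h_2$ must absorb all error terms. Handling this requires making the bending region arbitrarily thin while keeping the curvature terms produced by $G_t$ uniformly bounded, so that the interior $Ric_k$ strictly dominates the bending defect; in effect one trades a large rounding angle over a short arclength, forcing the negative contributions from the bending to become negligible relative to the positive interior curvature.
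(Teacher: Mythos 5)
You should first note that this statement is not proved in the paper at all: it is quoted verbatim from \cite[Theorem A]{RW3}, so the only ``proof'' here is a citation, and the argument you are trying to reconstruct is the (rather delicate) smoothing argument carried out in that reference. Your opening moves do match the spirit of that Perelman-type argument: write both metrics in Fermi coordinates, use $\phi$ to form $h=dt^2+G_t$ on $(-\epsilon,\epsilon)\times\Sigma$, and observe that the hypothesis is exactly that the jump of $\partial_t G_t$ at $t=0$ is $2(\II_{\partial M_1}+\phi^*\II_{\partial M_2})\geq 0$, so the problem is to smooth a corner whose first-derivative defect has a favourable sign.

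Beyond that, however, the proposal has concrete gaps. First, the ``bent-double'' metric $\tilde h=ds^2+A(s,\theta)^2d\theta^2+G_{\tau(s,\theta)}$ is a metric on the $(n+1)$-dimensional space $(-\delta,\delta)\times(-\eta,\eta)\times\Sigma$, not on the glued $n$-manifold; you never pass back to $M_1\cup_\phi M_2$ (e.g.\ by realising the glued collar as a bent hypersurface and using the Gauss equation), so as written the construction does not produce the metric the theorem asserts. Second, Proposition \ref{algebra} cannot be invoked here: it requires the very rigid block structure in which the whole $T\Sigma$-factor interacts through single scalars $\lambda_{13},\lambda_{23},\lambda_3$ and one coupling $\tilde\lambda$, which holds for the doubly warped metrics of Lemma \ref{neck_curv} but not for $dt^2+G_t$ with an arbitrary boundary $(\Sigma,G_0)$ and arbitrary second fundamental form; one must instead estimate the relevant $k$-traces of the curvature of the smoothed metric directly. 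Third, and most importantly, the semi-definite case--which is the entire content of the theorem beyond routine openness--is asserted rather than proved: making the smoothing region thin does not make the errors negligible, because the curvature of the smoothed family involves second $t$-derivatives, which scale like $\delta^{-1}$ times the variation of $\partial_t G_t$ across a region of width $\delta$. Along kernel directions of $\II_{\partial M_1}+\phi^*\II_{\partial M_2}$ there is no positive corner contribution to absorb these terms, so one needs a quantitative choice of smoothing (and the accompanying uniform curvature estimates, as in \cite{RW3}) showing that the interior $Ric_k>0$ dominates errors that do not blow up; ``trading a large rounding angle over a short arclength'' is a heuristic, not an argument, and it is precisely where a naive mollification can fail.
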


We will also need the following result of Perelman:
\begin{proposition}[{\cite[Section 3]{Pe}}]\label{ambient_space}
	For every $n\geq 3$, $\ell\geq 0$, $R_0\in(0,1)$ and $r>0$ sufficiently small there exists a metric $g$ on $S^n\setminus\sqcup_\ell (D^n)^\circ$ such that
	\begin{enumerate}
		\item $g$ has positive sectional curvature,
		\item The induced metric on each boundary component is of the form $dt^2+B(t)^2ds_{n-2}^2$ with $t\in[0,\pi \cos(r)]$ and $\max_t B(t)=\cos(r) R_0\frac{\sin(r+r^4/4)}{\sin(r)}$, and has sectional curvature at least 1, and
		\item The principal curvatures at each boundary are all at least -1.
	\end{enumerate}
\end{proposition}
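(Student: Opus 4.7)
The plan is to follow Perelman's approach in~\cite[Section~3]{Pe} and construct an $O(n-1)$-symmetric metric on $S^n\setminus\sqcup_\ell(D^n)^\circ$, reducing the problem to designing a doubly warped product on a $2$-dimensional orbit space. I would fix an $O(n-1)$-action on $S^n$ whose fixed locus is a circle $\Sigma\cong S^1$ and write an invariant metric in the form
\[ g = d\rho^2 + A(\rho,x)^2\,dx^2 + B(\rho,x)^2\,ds_{n-2}^2, \]
where $\rho\geq 0$ is the normal parameter to $\Sigma$, $x$ runs along $\Sigma$, and $B(0,x)=0$ so that $g$ extends smoothly across the fixed circle. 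The $\ell$ removed disks are then taken to be $O(n-1)$-invariant tubular neighbourhoods of $\ell$ disjoint arcs on $\Sigma$; by symmetry, each of their boundaries is automatically a warped product on $S^{n-1}$ of the form required by~(ii).

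Starting from the round sphere (which in these coordinates reads $A=\cos\rho$, $B=\sin\rho$ with $\Sigma$ of length $2\pi$), I would perturb the warping functions, controlled by the small parameter $r$, so that: first, the circle $\Sigma$ is stretched so that $\ell$ disjoint arcs of $t$-length $\pi\cos(r)$ fit on it; second, inside tubular neighbourhoods of these arcs the functions $A$ and $B$ are deformed so that the induced boundary metric has the prescribed form, with $\max_t B(t)=\cos(r)\,R_0\,\tfrac{\sin(r+r^4/4)}{\sin(r)}$; and third, a further adjustment near each boundary imposes the principal-curvature bound $\II\geq -1$. Away from these tubular neighbourhoods the metric is left unchanged, i.e.\ equal to the round metric.

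The verification of positive sectional curvature then proceeds via the doubly warped curvature formulas, compare with Lemma~\ref{neck_curv}: for the unperturbed round model each relevant expression equals $1$, and a sufficiently small perturbation (controlled by $r$) keeps them all strictly positive. On the boundary itself, the stronger lower bound $K\geq 1$ on the induced sectional curvature is imposed directly by the explicit choice of $B$, whose profile approximates that of a slightly rescaled round sphere of waist $R_0$ but $t$-length close to~$\pi$.

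The central difficulty is that (i)--(iii) must be realised \emph{simultaneously} by the same pair $(A,B)$: the deformations needed to produce the exact warped boundary metric and the principal-curvature bound must be small enough, in an appropriate $C^2$-sense, to preserve positive sectional curvature in the bulk. This is precisely the quantitative estimate carried out in~\cite[Section~3]{Pe}, and my proof amounts to retracing or quoting those estimates. The curious constant $\cos(r)\,R_0\,\tfrac{\sin(r+r^4/4)}{\sin(r)}$ arises naturally from matching the perturbed warping function along the excised arcs to the unperturbed round metric at their endpoints on $\Sigma$.
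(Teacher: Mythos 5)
The paper itself offers no proof of this proposition: it is stated with the citation \cite[Section 3]{Pe} as its entire justification, so your final move of ``retracing or quoting'' Perelman is in line with what the authors do. The difficulty is that the mechanism you sketch around that citation is not a faithful outline of the construction and would fail if carried out as described. You propose to leave the metric round away from tubular neighbourhoods of arcs on the fixed circle $\Sigma$ and to get positive sectional curvature ``by continuity'' from the round model under a perturbation that is small (controlled by $r$). Conditions (ii) and (iii) cannot both be reached that way. The waist $\max_t B(t)=\cos(r)\,R_0\,\sin(r+r^4/4)/\sin(r)\approx R_0$ is \emph{arbitrarily} small (in the proof of Theorem \ref{docking_station} one takes $R_0<\nu^2$), while (iii) demands principal curvatures $\geq -1$ with respect to the outward normal of the ambient piece. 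If the metric near a removed region were close to round, a boundary sphere of waist $\approx R_0$ would be essentially the boundary of a tube of radius $\approx R_0$ about an arc of $\Sigma$, and from the complement its principal curvatures in the $S^{n-2}$-directions are $\approx-\cot(R_0)\approx-1/R_0\ll-1$. Equivalently, writing the invariant metric as $\check g+f^2ds_{n-2}^2$ over the two-dimensional orbit space, condition (iii) forces $|\partial_\nu f|\leq f\leq R_0$ along the entire boundary profile (which has length $\pi\cos(r)\approx\pi$), whereas for the round metric $|\nabla f|\approx 1$ wherever $f\leq R_0$.

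So each removed ball must be surrounded by a macroscopic, almost cylindrical collar in which the $S^{n-2}$-warping is flattened to a nearly constant value $\approx R_0$. That is a large deformation, not a small one, and in such an almost-product region the sectional curvatures are close to zero, so positivity is a borderline quantitative issue that cannot be settled by ``a sufficiently small perturbation keeps all curvatures positive''. (Your stretching of $\Sigma$ so that $\ell$ arcs of length $\approx\pi\cos(r)$ fit is likewise a global modification, incompatible with leaving the metric round outside small tubes once $\ell\geq 3$.) This is precisely why \cite[Section 3]{Pe} consists of an explicit choice of warping profile, with the shift $r^4/4$ built in, together with explicit curvature estimates, rather than a soft deformation argument. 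Either state plainly that you are quoting \cite[Section 3]{Pe}, as the paper does, or replace the perturbative verification with the actual quantitative construction; as written, that verification step is a genuine gap.
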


\begin{proof}[Proof of Theorem \ref{docking_station}]
	We equip $S^n\setminus\sqcup_{\ell}(D^n)^\circ$ with the metric provided by Proposition \ref{ambient_space}, where $R_0$ is so small that $R_0<\nu^2$, and $r$ is so small so that $\cos(r)>\nu$ and $\cos(r)R_0\frac{\sin(r+r^4)}{\sin(r)}<\nu^2$. Hence, using Theorem \ref{gluing}, we can glue a copy of the neck obtained in Proposition \ref{neck} to each of the $\ell$ boundary components of $S^n\setminus\sqcup_{\ell}(D^n)^\circ$ to obtain a metric of $Ric_2>0$ on the resulting manifold. Note that $\cos(r)$ in Proposition \ref{ambient_space} corresponds to $R$ in Proposition \ref{neck} and $\cos(r)R_0\frac{\sin(r+r^4)}{\sin(r)}$ in Proposition \ref{ambient_space} corresponds to $r$ in Proposition \ref{neck}, and we choose $\rho=\nu$. Finally, we rescale the metric by $\frac{\lambda}{\rho}$ so that the induced metric on each boundary component is the round metric of radius $1$ and the principal curvatures are all given by $-\rho=-\nu$.
\end{proof}

\begin{proof}[Proof of Theorem \ref{k-core_conn_sum}]

	The proof is essentially similar to the proof of \cite[Theorem B]{Bu2}. We denote by $\varphi_i\colon D^n\hookrightarrow M_i$ the embedding provided by Definition \ref{k-core}. We now slightly perturb the $k$-core metric on each $M_i\setminus \varphi_i(D^n)^\circ$, e.g.\ as in \cite[Proposition 1.2.11]{Bu1}, such that the second fundamental form is strictly positive. Let $\nu_0>0$ be the smallest principal curvature of all these metrics. Thus, by Theorem \ref{gluing}, we can glue each $M_i\setminus \varphi_i(D^n)^\circ$ to $S^n\setminus\sqcup_{\ell}(D^n)^\circ$ with the metric provided by Theorem \ref{docking_station} by choosing $\nu<\nu_0$. Hence, we obtain a metric of $Ric_k>0$ on the connected sum $M_1\#\dots\# M_\ell$.
	
\end{proof}

\section{$k$-core metrics}

In this section we consider $k$-core metrics. We begin by restating Proposition \ref{k-core_obstr}. 

\begin{proposition}\label{k-core_obstr2}
	Let $M$ be a closed $n$-dimensional manifold that admits a $k$-core metric. Then $M$ is $(n-k)$-connected. In particular, if $k\leq \lfloor\frac{n+1}{2}\rfloor$, then $M$ is a homotopy sphere.
\end{proposition}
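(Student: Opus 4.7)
The strategy is to cut $M$ along $\varphi(\partial D^n)$, apply Wu's theorem to the complement, and then reassemble. Set $W := M\setminus\varphi(D^n)^\circ$, a compact Riemannian manifold with boundary $\partial W\cong S^{n-1}$ and $Ric_k>0$ in the interior. Since the outward unit normal to $\varphi(D^n)$ along $S^{n-1}$ coincides with the inward unit normal to $W$, condition (ii) of Definition \ref{k-core} asserts that $\II_{\partial W}$ is positive semi-definite with respect to the inward normal of $W$. This is exactly the hypothesis of Wu's theorem \cite{Wu}, which gives that the inclusion $\iota\colon\partial W\hookrightarrow W$ is $(n-k)$-connected, i.e.\ $\iota_*$ is an isomorphism on $\pi_j$ for $j<n-k$ and a surjection on $\pi_{n-k}$.

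Combining this with $\pi_j(S^{n-1})=0$ for $j\leq n-2$ yields $\pi_j(W)=0$ for $j\leq n-k$ whenever $k\geq 2$. Now $M=W\cup_{\partial W}\varphi(D^n)$ is obtained from $W$ by attaching a single $n$-cell, so $\pi_j(W)\xrightarrow{\cong}\pi_j(M)$ for $j\leq n-2$. Combining these two facts gives $\pi_j(M)=0$ for $j\leq n-k$, as required. The case $k=1$ needs the additional observation that Poincar\'e duality forces $H_{n-1}(M)=0$ once $M$ is known to be $(n-2)$-connected, so $\pi_{n-1}(M)=0$ by Hurewicz.

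For the ``in particular'' statement, if $k\leq\lfloor(n+1)/2\rfloor$ then $n-k\geq\lfloor n/2\rfloor$, so $M$ is $\lfloor n/2\rfloor$-connected and in particular simply-connected and orientable. Poincar\'e duality together with universal coefficients then forces $H_j(M)=0$ for all $1\leq j\leq n-1$: the first half vanishes by the connectivity, and the second half by duality with the first. Thus $M$ is a simply-connected homology $n$-sphere, and Whitehead's theorem identifies it as a homotopy sphere.

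The main point of delicacy is the precise form of Wu's theorem \cite{Wu}: one must verify that the stated conclusion (homotopy, rather than merely homology, connectivity) and hypothesis (positive semi-definite, rather than strictly positive, $\II$) match our setting. If Wu provides only a homology statement, one can still recover homotopy connectivity through Hurewicz once simple-connectedness of $M$ is established.
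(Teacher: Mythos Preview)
Your proof is correct and follows essentially the same approach as the paper: apply Wu's theorem to $W=M\setminus\varphi(D^n)^\circ$ and then reattach the disc. The only difference is packaging: the paper invokes Wu in its cell-attachment form ($W$ is built from $S^{n-1}$ by attaching cells of dimension $\geq n-k+1$), so that collapsing $\varphi(D^n)$ to a $0$-cell yields a CW structure on $M$ with no cells in dimensions $1,\dots,n-k$, giving $(n-k)$-connectedness for all $k$ in one stroke; you instead extract the connectivity of the inclusion $\partial W\hookrightarrow W$ and push homotopy groups through, which forces a separate Poincar\'e-duality patch at $k=1$.
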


\begin{proof}
	Since the boundary of $M\setminus\varphi(D^n)^\circ$ has positive semi-definite second fundamental form, it follows from \cite[Theorem 1]{Wu} that $M\setminus\varphi(D^n)^\circ$ is obtained from $\varphi(S^{n-1})$ by attaching cells of dimension at least $n-k+1$. By viewing $\varphi(D^n)$ as a $0$-cell, we obtain a CW structure for $M$ with no cells in dimensions between 1 and $n-k$. It follows that $M$ is $(n-k)$-connected.
	
	Now if $k\leq \lfloor\frac{n+1}{2}\rfloor$, we obtain by Poincaré duality that $M$ is a closed simply-connected manifold with non-trivial homology groups only in degrees $0$ and $n$. Hence, $M$ is a homotopy sphere.
\end{proof}

We will now consider examples of manifolds with $k$-core metrics and applications to plumbing.
	
\subsection{Projective spaces}
	
To prove Theorem \ref{k-core_examples}, we will adapt the construction in \cite{Ch}, where a metric of non-negative sectional curvature and round totally geodesic boundary is constructed on $\C P^n$, $\Ha P^n$ and $\mathbb{O} P^2$ with a disc removed. We will follow \cite[Sections 3 and 4]{BM} and also include the arguments for $\C P^n$ as they are entirely similar.

The key observation is that, by considering cohomogeneity-one actions on these spaces, they can all be written as a disc bundle $G\times_K D\to G/H$, where $H\subseteq K\subseteq G$ are compact Lie groups. Here $K$ acts by isometries on a Euclidean vector space $V$ with principal isotropy group $H$ via a representation $\rho\colon K\to O(V)$ , and $D\subseteq V$ is the unit disc. The corresponding groups are given as follows, see \cite[Section 6.3]{AB}, \cite[Section 4.1]{BM} and \cite[Example 1]{Iw}.

\renewcommand{\arraystretch}{1.5}
\begin{table}[h!]
	\centering
	\begin{tabular}{|l|l|l|l|}
		\hline
		& $G$ & $K$ & $H$\\\hline\hline
		$\C P^n\setminus{D^{2n}}^\circ$ & $U(n)$ & $U(n-1)U(1)$ & $U(n-1)$\\
		$\Ha P^n\setminus{D^{4n}}^\circ$ & $Sp(n)$ & $Sp(n-1)Sp(1)$ & $Sp(n-1)$\\
		$\mathbb{O}P^2\setminus{D^{16}}^\circ$ & $Spin(9)$ & $Spin(8)$ & $Spin(7)$\\\hline
	\end{tabular}
	\caption{Cohomogeneity one structure of projective spaces with a disc removed.}
	\label{table}
\end{table}

The representation $\rho$ is given by projection onto $U(1)$ (resp.\ $Sp(1)$) followed by inclusion into $O(2)$ (resp.\ $O(4)$) for $\C P^n$ (resp.\ $\Ha P^n$). For $\mathbb{O} P^2$ it is given by the covering map $Spin(8)\to SO(8)$.

%In the following we will write $\mathbb{K}$ for one of $\C$, $\Ha$ and $\mathbb{O}$ and assume that $G$, $K$ and $H$ are given accordingly.

We will construct a $k$-core metric on $G\times_K D$ by defining a $K$-invariant metric on $G\times D$, which then descends to $G\times_K D$ such that the projection $G\times D\to G\times_K D$ is a Riemannian submersion. On $G\times D$ we consider the metric
\[g=L+(dt^2+f(t)^2ds_m^2),\]
where $m=\dim(V)-1$, $L$ is a left-invariant metric on $G$ which is $Ad_K$-invariant and $f\colon [0,t_0]\to \R_{\geq 0}$ is a smooth function for some $t_0>0$ which is odd at $t=0$ with $f'(0)=1$ and $f(t)>0$ for $t\in(0,t_0]$.

Let $\mathfrak{g}=\mathfrak{k}\oplus\mathfrak{m}$ and $\mathfrak{k}=\mathfrak{h}\oplus\mathfrak{p}$ be $L$-orthogonal decompositions of the Lie algebras. For $X\in\mathfrak{k}$, $t\in[0,t_0]$ and $v\in S^{m}$ we denote by $X^*_{tv}$ the action field at $tv \in D$ defined by $X$, i.e.\ $X^*_{tv}=\frac{d}{ds}\rho\bigl(\exp_K(sX)\bigr)(tv)|_{s=0}$. Then the vertical and horizontal subspaces $\mathcal{V}_{(e,tv)}$ and $\mathcal{H}_{(e,tv)}$ of $T_{(e,tv)}(G\times D)$ with respect to $g$ are given for $t>0$ by
\begin{align}
	\notag\mathcal{V}_{(e,tv)}&=(\mathfrak{h}\oplus\{0\})\oplus \{(-X,X^*_{tv})\mid X\in \mathfrak{p} \},\\
	\mathcal{H}_{(e,tv)}&=(\mathfrak{m}\oplus\{0\})\oplus \{(f(t)^2BY,Y^*_{tv})\mid Y\in\mathfrak{p}\}\oplus\langle\partial_t\rangle  ,\label{EQ:VH}
\end{align}
where $B\colon \mathfrak{p}\to\mathfrak{p}$ is the $L$-symmetric and $Ad_H$-linear automorphism defined by $L(X,BY)=ds_m^2(X^*_{tv},Y^*_{tv})$, cf.\ \cite[Equation (3.1)]{BM}. For $t=0$ we have
\begin{align}
	\notag\mathcal{V}_{(e,0)}&=\mathfrak{k}\oplus\{0\},\\
	\mathcal{H}_{(e,0)}&=\mathfrak{m}\oplus T_0D.\label{EQ:VHt=0}
\end{align}

With this description we can now give the proof of Theorem \ref{k-core_examples}.

\begin{proof}[Proof of Theorem \ref{k-core_examples}]
	We equip $G$ with the left-$G$-invariant and right-$K$-invariant which induces the round metric on $G/H$ (note that this metric does not need to be normal homogeneous). For $\C P^n$ (resp.\ $\Ha P^n$) the restriction to $U(1)$ (resp.\ $Sp(1)$) is then biinvariant, hence it is the round metric of some radius. In particular, the map $B$ is a multiple of the identity map. For $\mathbb{O} P^2$, the action of $H$ on $\mathfrak{p}$ is irreducible, so $B$ is also a multiple of the identity map by Schur's Lemma.
	
	Hence, there exists $b\in\R$ so that $B=b\cdot \mathrm{Id}_{\mathfrak{p}}$. For $\epsilon>0$ we now define the metric $L_\epsilon$ on $G$ via
	\[L_\epsilon=(1+\epsilon)L|_{\mathfrak{k}}+L|_{\mathfrak{m}},  \]
	so $L_\epsilon$ is again left-$G$-invariant and right-$K$-invariant and the map $B_\epsilon$ is given by $\frac{1}{1+\epsilon}b\cdot\mathrm{Id}_{\mathfrak{p}}$. Then the metric
	\[g_\epsilon=L_\epsilon+(dt^2+f(t)^2ds_m^2) \]
	on $G\times D$ induces a metric $\check{g}_\epsilon$ on $G\times_K D$ such that the projection $G\times D\to G\times_K D$ is a Riemannian submersion. The metric induced on a slice $G\times_K S^m=G\times_K(K/H)\cong G/H$ for $t>0$ is then given by
	\[ \frac{f(t)^2\frac{b}{1+\epsilon}}{1+f(t)^2\frac{b}{1+\epsilon}}L_\epsilon|_{\mathfrak{p}}+L_\epsilon|_{\mathfrak{m}}=(1+\epsilon)\frac{f(t)^2\frac{b}{1+\epsilon}}{1+f(t)^2\frac{b}{1+\epsilon}}L|_{\mathfrak{p}}+L|_{\mathfrak{m}}, \]
	see e.g.\ \cite{Ch}, \cite{GZ}, \cite[Lemma 3.1]{BM}. In particular, if $f(t)=\sqrt{\frac{1+\epsilon}{b\epsilon}}$, then this metric coincides with the metric induced from $L$ on $G/H$, i.e.\ it is the round metric. Thus, we will assume from now on that for given $\epsilon$, the function $f$ (and the value of $t_0$) is chosen such that $f(t_0)=\sqrt{\frac{1+\epsilon}{b\epsilon}}$, so that the induced metric on the boundary of $G\times_K D$ is round. Moreover, we assume that $f'(t_0)\geq 0$, so the second fundamental form on the boundary is positive semi-definite.
	
	We will now analyse the curvatures of the metric $\check{g}_\epsilon$ on $G\times_K D$. We assume that $f''<0$, so the metric $h_f=dt^2+f(t)^2ds_m^2$ on $D$ has positive sectional curvature. We choose $\epsilon$ sufficiently small such that the metric induced on $G/H$ by $L_\epsilon$ also has positive sectional curvature. It then follows that the metric $\check{g}_\epsilon$ has non-negative sectional curvature, see \cite{Ch}, \cite[Lemma 4.1]{BM}. Thus, to determine the smallest value $k$ for which this metric has $Ric_k>0$, we only need to identify the 2-planes of vanishing curvature, i.e.\ for given $u\in T(G\times_K D)$ we need to determine the set
	\[Z_u=\{v\in u^\perp\setminus\{0\}\mid\sec^{\check{g}_\epsilon}(u\wedge v)=0 \}. \]
	
	Let $A$ denote the $A$-tensor of the Riemannian submersion $(G\times D,g_\epsilon)\to (G\times_K D,\check{g}_\epsilon)$ and decompose $A$ into $A=A^1+A^2$ according to the splitting \eqref{EQ:VH}, i.e.\ $A^1$ has image in $\mathfrak{h}\oplus\{0\}$ and $A^2$ has image in $\{(-X,X_{tv}^*)\mid X\in\mathfrak{p}  \}$. As in \cite[Proof of Lemma 4.1]{BM} we conclude that for horizontal vectors $u=(u_1,u_2), v=(v_1,v_2)$ in $T_{(e,tv)}(G\times D)$ with $t>0$, we have
	\[{A^1}_u v={A^{G/H}}_{u_1}v_1,  \]
	where $A^{G/H}$ is the $A$-tensor of the Riemannian submersion $G\to G/H$ (where we consider $G$ equipped with the metric $L_\epsilon$). It follows from the O'Neill formulas that
	\begin{align*}
		\check{g}_\epsilon( R^{\check{g}_\epsilon}(u,v)v,u)&=g_\epsilon(R^{g_\epsilon}(u,v)v,u )+3|A_u v|^2\\
		&=L_\epsilon(R^{L_\epsilon}(u_1,v_1)v_1,u_1 )+h_f(R^{h_f}(u_2,v_2)v_2,u_2 )+3|A^{G/H}_{u_1}v_1|^2+3|A^2_{u}v|^2\\
		&= L_\epsilon(R^{G/H}(u_1,v_1)v_1,u_1 )+h_f(R^{h_f}(u_2,v_2)v_2,u_2  )+3|A^2_u v|^2.
	\end{align*}
	Since both the metric on $G/H$ and the metric $h_f$ have strictly positive sectional curvature, this expression can only vanish if the pairs $(u_1,v_1)$ and $(u_2,v_2)$ are both linearly dependant. If we write, according to \eqref{EQ:VH},
	\begin{align*}
		u&=(u_1,u_2)=(X+f(t)^2B_\epsilon Y,Y_{tv}^*+\lambda\partial_t),\\
		v&=(v_1,v_2)=(X'+f(t)^2B_\epsilon Y',{Y'}_{tv}^*+\lambda'\partial_t),
	\end{align*}
	this is satisfied if and only if there exist $a_1,a_2\in\R$ such that
	\begin{align*}
		\quad\quad(X',Y')&=a_1(X,Y) &&\text{or} &(X,Y)&=(0,0), \quad\text{and}\quad\quad\\
		\quad\quad(Y',\lambda')&=a_2(Y,\lambda) &&\text{or} &(Y,\lambda)&=(0,0).\quad\quad
	\end{align*}
	If $Y\neq 0$, then $a_1=a_2$, hence $v=a_1 u$ and $Z_u$ is empty. Hence, we can assume that $Y=0$. Then, if $X,\lambda\neq 0$, we have $X'=a_1 X$, $\lambda'=a_2\lambda$ and $Y'=0$, hence $Z_u$ is contained in a 1-dimensional subspace. Thus, we are left with the cases $X=0$, $\lambda\neq 0$ and $X\neq 0$, $\lambda=0$. In the first case, we have $Y'=0$ and $\lambda'=a_2\lambda$, so $Z_u$ is contained in a $\dim(G/K)$-dimensional subspace. In the second case we have $Y'=0$ and $X'=a_1X$, so $Z_u$ is contained in a 1-dimensional subspace.
	
	Hence, we have shown that $Z_u$ is contained in a $\dim(G/K)$-dimensional subspace for all $u\in T_{e,tv}(G\times_K D)$ and $t>0$. By $G$-invariance of the metric $\check{g}_\epsilon$ this holds for all points $(g,tv)$ with $t>0$. Similar arguments using \eqref{EQ:VHt=0} show that this result extends to the case $t=0$. Thus, the metric $\check{g}_\epsilon$ has $Ric_{\dim(G/K)+1}>0$. For $\C P^n$ this gives a metric of $Ric_{2n-1}>0$, for $\Ha P^n$ a metric of $Ric_{4n-3}>0$ and for $\mathbb{O} P^2$ a metric of $Ric_9>0$.	
\end{proof}

\subsection{Generalized surgery and plumbing}

To prove Theorem \ref{plumbing} we need two additional results: A surgery result extending \cite[Theorem 3.2]{RW2} and \cite[Theorem A]{Re1} and a deformation result that ensures that we can satisfy the assumptions of the surgery theorem in our setting. For $\rho>0$ we denote by $S^p(\rho)$ the round sphere of radius $\rho$ and for $R,N>0$ we denote by $D_R^{q+1}(N)$ a geodesic ball of radius $R$ in $S^{q+1}(N)$.
\begin{theorem}\label{surgery}
	Suppose we have the following:
	\begin{enumerate}
		\item A Riemannian manifold $(M^{p+q+1},g_M)$ of $Ric_{k_1}>0$,
		\item An isometric embedding $\iota\colon S^{p}(\rho)\times D^{q+1}_R(N)\hookrightarrow (M,g_M)$, (which implies $k_1\geq \max(p+1,q+2)$),
		\item A linear $S^q$-bundle $E\xrightarrow{\pi}B^{p+1}$, where $B$ is compact and admits a $k_2$-core metric $g_B$.
	\end{enumerate}
	Then, if $p,q\geq 2$, for any $r>0$ sufficiently small, there exists a constant $\kappa=\kappa(p,q,R/N,g_B,r)$, such that if $\frac{\rho}{N}<\kappa$, then the manifold
	\[ M_{\iota,\pi}=M\setminus\im(\iota)^\circ\cup_{\partial}\pi^{-1}(B\setminus \varphi(D^{p+1})^\circ ) \]
	admits a metric of $Ric_k>0$ for all $k\geq \max(p+2,q+2,q+k_2)$. This metric coincides outside the gluing area with a submersion metric on $E$ with totally geodesic round fibres of radius $r$ and a scalar multiple of the metric $g_M$ on $M$.
\end{theorem}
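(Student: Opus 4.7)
The plan is to construct metrics on $M \setminus \im(\iota)^\circ$ and on $E' := \pi^{-1}(B \setminus \varphi(D^{p+1})^\circ)$ that have $Ric_k > 0$, have isometric boundaries, and satisfy the second-fundamental-form hypothesis of Theorem \ref{gluing}; then invoke the gluing theorem. On the $M$-side, I rescale $g_M$ by $\lambda^2$ with $\lambda := r/(N \sin(R/N))$. The boundary of $M \setminus \im(\iota)^\circ$ becomes isometric to $S^p(\lambda\rho) \times S^q(r)$, and its outward second fundamental form (with normal pointing into the removed region) vanishes on the horizontal $S^p$-directions and equals $-(\cos(R/N)/r)\cdot \mathrm{Id}$ on the vertical $S^q$-directions.

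For the $E'$-side, I equip $E$ with a Riemannian submersion metric whose base is the rescaled $k_2$-core metric $(\lambda\rho)^2 g_B$ and whose fibres are totally geodesic round $S^q$'s of radius $r$, using a natural Euclidean connection on the linear bundle. Since $\varphi(S^p)=\partial \varphi(D^{p+1})$ bounds a disc in $B$, the bundle $E|_{\varphi(S^p)}$ is trivializable, and the induced boundary metric of $E'$ is the product $S^p(\lambda\rho) \times S^q(r)$, matching the $M$-side. The second fundamental form of $\partial E'$ is non-negative on the horizontal directions (coming from the positive semi-definite second fundamental form of $\varphi(S^p)$ in $B$ from the $k_2$-core condition) and zero on the vertical directions. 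To obtain a positive vertical second fundamental form that will dominate $-\cos(R/N)/r$ from the $M$-side, I further deform $g_E$ in a thin collar of $\partial E'$ into a doubly warped form $ds^2 + h_1(s)^2 g_{S^p} + h_2(s)^2 g_{S^q}$, with $h_2'(s_0)/h_2(s_0)$ slightly exceeding $\cos(R/N)/r$ and $h_1'(s_0) \geq 0$.

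The curvature analysis proceeds in two parts. In the interior submersion region, O'Neill's formulas express the curvature operator in the form appearing in Proposition \ref{algebra}, with the vertical splitting given by the fibre direction and a one-dimensional normal direction. The vertical sectional curvatures are of order $1/r^2$ (the fibre contribution), the pure-horizontal $k$-chains are handled by the $Ric_{k_2}$ positivity of $g_B$, and the mixed $k$-chains are controlled using the $A$-tensor and the algebraic inequalities (i)--(iv) of Proposition \ref{algebra}. In the deformed collar, the metric is a doubly warped product and its curvatures are a specialisation of Lemma \ref{neck_curv}; applying Proposition \ref{algebra} again yields $Ric_k > 0$ for $k \geq \max(p+2, q+2, q+k_2)$ provided the warping is mild, which is where the hypothesis $\rho/N < \kappa$ enters.

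The main obstacle is the simultaneous management of (a) matching the vertical shape, requiring a positive $h_2'/h_2$ of size roughly $1/r$ at the boundary, (b) preserving $Ric_k > 0$ throughout the collar deformation, and (c) smoothly interpolating back to the totally geodesic submersion metric in the interior. The smallness constant $\kappa = \kappa(p,q,R/N,g_B,r)$ is dictated by these competing requirements: when the rescaled base is much smaller than the fibre, the large positive fibre sectional curvature provides sufficient reserve for the algebraic inequalities of Proposition \ref{algebra} to hold throughout the deformation. Finally, Theorem \ref{gluing} applied to $(M \setminus \im(\iota)^\circ, \lambda^2 g_M)$ and the deformed $(E', g_E)$ yields the required $Ric_k > 0$ metric on $M_{\iota,\pi}$.
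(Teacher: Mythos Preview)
Your overall strategy of equipping $E'$ with a submersion metric and gluing directly to a rescaled $M\setminus\im(\iota)^\circ$ is natural, but the collar deformation you describe is precisely the step that carries all the difficulty, and your proposal does not justify it. After your rescaling, the vertical second fundamental form on the $M$-side boundary equals $-\cos(R/N)/r$, which is of order $1/r$ and \emph{not} small. On the $E'$-side the submersion metric with totally geodesic fibres gives zero vertical second fundamental form. To make the sum non-negative you must bend the fibre radius so that $h_2'(s_0)/h_2(s_0)>\cos(R/N)/r$, while in the interior $h_2$ is constant; this forces $h_2''>0$ somewhere in the collar, producing negative sectional curvature $-h_2''/h_2$ in the $\partial_s\wedge(\text{fibre})$ planes. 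Arranging this to be compensated within every $k$-chain, while simultaneously keeping the horizontal warping compatible with a base metric that is \emph{not} assumed to be a warped product near $\varphi(S^p)$, amounts to constructing a surgery transition metric with $Ric_k>0$. That is a substantial result in its own right, not a routine collar bend, and nothing in your sketch (in particular the appeal to Proposition~\ref{algebra}, which applies only once a doubly warped form is already in place) supplies it.

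The paper avoids this difficulty by a two-step gluing. First it invokes \cite[Theorem~C and Remark~4.2]{RW1}, which already performs the ordinary surgery on $M$ and produces, on $M_\iota=M\setminus\im(\iota)^\circ\cup_\partial(D^{p+1}\times S^q)$, a $Ric_k>0$ metric ($k\geq\max(p,q)+2$) that near the centre is the product $D^{p+1}_{R'}(N')\times S^q(r)$ with $R'/N'$ freely adjustable. Taking $R'/N'$ close to $\pi/2$ makes the principal curvatures along $\partial D^{p+1}_{R'}(N')\times S^q(r)$ arbitrarily close to zero. Now the second gluing replaces this product piece by $E'$; since the $k_2$-core boundary can be perturbed to have \emph{strictly} positive second fundamental form, the hypothesis of Theorem~\ref{gluing} is met with margin to spare. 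The transition from large vertical curvature to small is thus absorbed entirely into the cited surgery theorem rather than being improvised on the bundle side. If you wish to salvage your one-step approach, you would effectively have to reprove that transition result inside $E'$.
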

\begin{proof}
	We equip $E$ with a submersion metric with totally geodesic and round fibres of radius $r$ according to a horizontal distribution which is integrable over $\varphi(D^{p+1})\subseteq B$. Then, for $r$ sufficiently small, this metric has $Ric_k>0$ for all $k\geq \max(p+2,q+k_2)$ by \cite[Corollary 3.1]{RW2}. Further, over $\varphi(D^{p+1})$, the metric is a product, in particular it is given over $\varphi(S^p)$ by $ds_p^2+r^2ds_q^2$. As noted below Definition \ref{k-core}, we can slightly deform the metric on $\pi^{-1}(B\setminus\varphi(D^{p+1})^\circ)$ so that the induced metric on the boundary remains unchanged and the second fundamental form on the boundary is strictly positive.
	
	Now, by \cite[Theorem C and Remark 4.2]{RW1} there exists a metric of $Ric_k>0$ on the manifold
	\[ M_\iota=M\setminus \im(\iota)^\circ\cup_\partial (D^{p+1}\times S^q) \]
	for all $k\geq \max(p,q)+2$ such that the metric near the centre of $D^{p+1}\times S^q$ is given by $D_{R'}^{p+1}(N')\times S^q(\rho')$, where the values of $R',N',\rho'$ can be chosen freely (provided $\frac{R'}{N'}<\frac{\pi}{2}$). We choose $\rho'=r$ and $R',N'$ so that the induced metric on $\partial D^{p+1}_{R'}(N')$ is $ds_p^2$ and the principal curvatures at the boundary are at least $-\epsilon$ for given $\epsilon>0$. (Note that they converge to $0$ as $\frac{R'}{N'}\to\frac{\pi}{2}$).
	
	It follows that $D_{R'}^{p+1}(N')\times S^q(\rho')$ and $\pi^{-1}(B\setminus \varphi(D^{p+1})^\circ)$ have isometric boundaries, and for $\epsilon$ sufficiently small the principal curvatures of $\pi^{-1}(B\setminus \varphi(D^{p+1})^\circ)$ at the boundary are greater than those of $D_{R'}^{p+1}(N')\times S^q(\rho').$ Hence, by Theorem \ref{gluing}, we can replace $D^{p+1}\times S^q$ in $M_\iota$ by $\pi^{-1}(B\setminus \varphi(D^{p+1})^\circ)$ to construct $M_{\iota,\pi}$ while preserving $Ric_k>0$.
\end{proof}
To satisfy condition (ii) of Theorem \ref{surgery}, we need the following deformation result, which generalizes \cite[Theorem 1.10]{Wr2}.
\begin{lemma}\label{local_def_1-jet}
	Let $(M^n,g_0)$ be a Riemannian manifold of $Ric_k>0$ and let $N^p\subseteq M$ be a compact embedded submanifold. Let $g_1$ be a metric of $Ric_k>0$ defined in a tubular neighbourhood $U$ of $N$. If the $1$-jets of $g_0$ and $g_1$ on $N$ coincide, then there exists a metric $\tilde{g}$ of $Ric_k>0$ on $M$ that equals $g_0$ outside $U$ and equals $g_1$ on a (smaller) tubular neighbourhood of $N$.
\end{lemma}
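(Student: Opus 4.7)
The plan is to construct $\tilde g$ as a smooth interpolation between $g_0$ and $g_1$ via a cutoff function in the normal direction to $N$, generalizing the approach of \cite[Theorem 1.10]{Wr2} for positive Ricci curvature. First I would work in Fermi coordinates $(x,y)$ around $N$, with $N = \{y = 0\}$ and $|y|$ equal to the distance from $N$. The hypothesis that the $1$-jets of $g_0$ and $g_1$ agree on $N$ translates to $h_{IJ}(x,0) = 0$ and $\partial_K h_{IJ}(x,0) = 0$ for $h := g_1 - g_0$. Consequently, by Taylor's theorem, $h$ factors as $h_{IJ}(x,y) = y^k y^l H_{IJ,kl}(x,y)$ for some smooth tensor $H$; in particular $h = O(|y|^2)$ and $\partial h = O(|y|)$ locally uniformly.

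Fix a smooth cutoff $\phi\colon [0,\infty) \to [0,1]$ with $\phi \equiv 1$ on $[0,1/2]$ and $\phi \equiv 0$ on $[1,\infty)$, and for small $\lambda > 0$ define
\[ \tilde g_\lambda := g_0 + \phi(|y|/\lambda)\, h \]
in $U$, extended by $g_0$ outside. Then $\tilde g_\lambda = g_1$ on the $(\lambda/2)$-tube around $N$ and $\tilde g_\lambda = g_0$ outside the $\lambda$-tube, so the only region requiring analysis is the transition annulus $T_\lambda := \{\lambda/2 \le |y| \le \lambda\}$. The lemma will follow once I show that $Ric_k(\tilde g_\lambda) > 0$ on $T_\lambda$ for some sufficiently small $\lambda$, by setting $\tilde g := \tilde g_\lambda$.

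To establish this I would argue by contradiction combined with a rescaling (blow-up) argument. If the claim failed, one could find $\lambda_j \to 0$ and $p_j = (x_j, \lambda_j z_j) \in T_{\lambda_j}$ with $Ric_k(\tilde g_{\lambda_j})(p_j) \le 0$. By compactness of $N$ and of the spherical shell $\{1/2 \le |z| \le 1\}$, one extracts a subsequence with $x_j \to x_* \in N$ and $z_j \to z_*$. A direct computation -- using the factorization $h = y^k y^l H_{IJ,kl}$ to absorb the factors of $\lambda^{-1}$ and $\lambda^{-2}$ produced by differentiating $\phi(|y|/\lambda)$ -- shows that although individual second derivatives of $\phi_\lambda h$ are only $O(1)$ on $T_\lambda$, the curvature operator $\mathcal{R}_{\tilde g_{\lambda_j}}(p_j)$ remains bounded and converges to an explicit limit operator $\mathcal{R}_\ast$ depending only on the $2$-jets of $g_0$ and $g_1$ at $x_*$ and on the cutoff data at $|z_*|$.

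The main obstacle is to verify that $\mathcal{R}_\ast$ itself satisfies $Ric_k > 0$: this is nontrivial because $\mathcal{R}_\ast$ is not simply a convex combination of $\mathcal{R}_{g_0}(x_*)$ and $\mathcal{R}_{g_1}(x_*)$ -- the effective weight that arises from differentiating $\phi(|y|/\lambda)\cdot y^k y^l$ involves a combination of the form $(|z|^2\phi(|z|))''$, which is not confined to $[0,1]$. I would handle this by identifying $\mathcal{R}_\ast$ as the curvature at the origin of a model metric on $T_{x_*}M$ sharing the common $1$-jet of $g_0,g_1$, and then reducing $Ric_k(\mathcal{R}_\ast) > 0$ to a finite list of algebraic inequalities on the $2$-jet data via Proposition \ref{algebra} in an adapted basis. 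Choosing $\phi$ carefully so that $(|z|^2\phi(|z|))''$ has controlled excursion on $[1/2,1]$, together with the strict positivity of $Ric_k$ for $g_0$ and $g_1$ (which gives strict versions of these inequalities at the endpoints $z_* \in \{0,1\}$), one verifies that the inequalities persist throughout the shell. Openness of $Ric_k > 0$ then contradicts the assumed failure at $p_j$ for large $j$, completing the argument.
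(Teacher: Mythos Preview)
Your approach differs substantially from the paper's and has a genuine gap at the decisive step.

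The paper does not use a spatial cutoff at all. It takes the affine path $g_t=(1-t)g_0+tg_1$ on $U$ and observes that, since the $1$-jets of $g_0$ and $g_1$ agree along $N$ and sectional curvature depends \emph{linearly} on the second derivatives of the metric once the $0$- and $1$-jet are fixed, one has $K_{g_t}=(1-t)K_{g_0}+tK_{g_1}$ pointwise on $N$. Thus every $g_t$ has $Ric_k>0$ on $N$, hence by compactness on a uniform neighbourhood of $N$. The passage from this local one-parameter family to a single global metric is then obtained by citing the B\"ar--Hanke local flexibility theorem \cite[Theorem 1.2]{BH} for open partial differential relations, which produces a global deformation equal to $g_t$ near $N$ and to $g_0$ outside $U$; setting $\tilde g=\tilde g_1$ finishes the proof.

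Your blow-up route is what one would attempt in the absence of \cite{BH}, and you correctly identify the obstacle: the limit operator $\mathcal{R}_\ast$ is not a convex combination of $\mathcal{R}_{g_0}(x_*)$ and $\mathcal{R}_{g_1}(x_*)$, because second derivatives of $\phi(|y|/\lambda)\,y^ky^l$ produce weights built from $\phi,\phi',\phi''$ that mix the normal Hessian components of $h$ non-convexly. The gap is in the proposed resolution. Proposition~\ref{algebra} concerns self-adjoint maps on $\Lambda^2 V$ with the very special block form arising from a splitting $V=V_1\oplus V_2\oplus V_3$ with $\dim V_1=\dim V_2=1$ and four scalar parameters; it is tailored to the warped-product metrics of Section~3 and has nothing to say about a generic $\mathcal{R}_\ast$ assembled from arbitrary $2$-jets of $g_0,g_1$ at $x_*$ together with cutoff data. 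There is no ``adapted basis'' in which Proposition~\ref{algebra} applies here, so the reduction to a finite list of inequalities is unavailable. Likewise, strict positivity at the shell endpoints (where $\mathcal{R}_\ast$ coincides with $\mathcal{R}_{g_1}(x_*)$ or $\mathcal{R}_{g_0}(x_*)$) does not control the interior, since the intermediate weights are not trapped between the endpoint values and ``controlled excursion'' of $(|z|^2\phi(|z|))''$ is not by itself enough. The paper sidesteps the whole issue by interpolating in a parameter $t$ rather than in space, so that on $N$ the curvature genuinely \emph{is} a convex combination, and then lets \cite{BH} absorb the transition.
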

\begin{proof}
	We consider for $t\in[0,1]$ the metric $g_t=(1-t)g_0+tg_1$ on $U$. Since the $1$-jets of $g_0$ and $g_1$ coincide on $N$ and since the sectional curvatures depend linearly on the second derivatives of the metric, we have $K_{g_t}=(1-t)K_{g_0}+tK_{g_1}$ on $N$. In particular, $g_t$ has $Ric_k>0$ on $N$ and by compactness this holds in a neighbourhood of $N$. By \cite[Theorem 1.2]{BH} the local deformation $g_t$ can now be extended to a global deformation of $g_0$, which leaves $g_0$ unchanged outside a neighbourhood of $N$ and coincides with the deformation $g_t$ on a (smaller) tubular neighbourhood of $N$.
\end{proof}
\begin{corollary}\label{local_def_curv1}
	Let $(M^n,g)$ be a Riemannian manifold of $Ric_k>0$ and let $p_1,\dots,p_\ell\in M$. Then the metric $g$ can be deformed into a metric of $Ric_k>0$ that has constant sectional curvature 1 in a neighbourhood of each $p_i$.
\end{corollary}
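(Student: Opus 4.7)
The plan is to apply Lemma \ref{local_def_1-jet} directly with $N=\{p_1,\dots,p_\ell\}$, viewed as a compact $0$-dimensional submanifold of $M$. A tubular neighbourhood of $N$ is simply a disjoint union $U=\sqcup_{i=1}^\ell U_i$ of small open balls $U_i$ around each $p_i$, so the problem reduces to constructing a suitable metric $g_1$ separately on each $U_i$.

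On each $U_i$ I would choose $g_0$-normal coordinates $(x^1,\dots,x^n)$ centered at $p_i$, so that in these coordinates $(g_0)_{ab}(p_i)=\delta_{ab}$ and $\partial_c(g_0)_{ab}(p_i)=0$. Now take the round metric $g_{\mathrm{rd}}$ of constant sectional curvature $1$ on $S^n$ and express it in geodesic normal coordinates centered at an arbitrary basepoint; by the same defining property of normal coordinates, its component functions satisfy $(g_{\mathrm{rd}})_{ab}(0)=\delta_{ab}$ and $\partial_c(g_{\mathrm{rd}})_{ab}(0)=0$ at the origin. Define $g_1$ on $U_i$ to be the metric whose coordinate matrix in $(x^1,\dots,x^n)$ equals the coordinate matrix of $g_{\mathrm{rd}}$ in those normal coordinates (shrinking $U_i$ if necessary so that this is well-defined). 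Then $g_1$ has constant sectional curvature $1$ throughout $U_i$, in particular $Ric_k>0$, and by construction its $1$-jet at $p_i$ agrees with that of $g_0$.

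With $g_1$ assembled in this way on all of $U$, the hypotheses of Lemma \ref{local_def_1-jet} hold, and applying the lemma yields a metric $\tilde{g}$ of $Ric_k>0$ on $M$ that equals $g_0$ outside $U$ and equals $g_1$ on a smaller tubular neighbourhood of $N$; in particular $\tilde{g}$ has constant sectional curvature $1$ near each $p_i$.

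There is no real obstacle here beyond checking the jet-matching condition: the statement is essentially a packaging of Lemma \ref{local_def_1-jet}, and the only nontrivial content is the elementary observation that two Riemannian metrics written in their respective normal coordinates at a point automatically have matching $1$-jets at that point.
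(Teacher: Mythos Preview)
Your proof is correct and follows essentially the same approach as the paper: both use normal coordinates around each $p_i$ for $g_0$ and for the round metric to verify that the $1$-jets agree at $p_i$, and then invoke Lemma~\ref{local_def_1-jet}. The paper's write-up is slightly terser but the content is identical.
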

\begin{proof}
	We consider normal coordinates around each $p_i$, i.e.\ coordinates $(x_1,\dots,x_n)$ in which the metric is given by $g_{ab}=\delta_{ab}+O(r^2)$, where $r$ denotes the distance to $p_i$. In particular, the first derivatives $\partial_c g_{ab}$ all vanish at $p_i$. By considering normal coordinates at a point in the round sphere of radius 1, we obtain a second metric around each $p_i$ with the same property. Applying Lemma \ref{local_def_1-jet} now yields the required deformation.
\end{proof}

\begin{proof}[Proof of Theorem \ref{plumbing}]
	The proof of Theorem \ref{plumbing} follows the same lines as the proof of \cite[Theorem B]{Re1} by observing that $\partial W$ is obtained by iterated generalized surgeries as in Theorem \ref{surgery}. We simply replace \cite[Theorem A]{Re1} by Theorem \ref{surgery}, \cite[Proposition 2.2]{Re1} by \cite[Corollary 3.1]{RW2} and the deformation result used in the proof of \cite[Theorem B]{Re1} by Corollary \ref{local_def_curv1}.
\end{proof}

\begin{proof}[Proof of Corollary \ref{sphere-bundles_conn-sum}]
	Let $\overline{E}_i\to B_i$ denote the disc bundle corresponding to $E_i\to B_i$. We define $W$ as the manifold obtained by plumbing according to the following graph, where we denote by $\underline{D}^m_M$ the trivial disc bundle $M\times D^m\to M$ over a manifold $M$.

\[
\begin{tikzcd}[cells={nodes={ellipse, draw=black, anchor=center,minimum height=2em}}]
	& \underline{D}^{p+1}_{S^q} & & \underline{D}^{p+1}_{S^q} & &\\
	\overline{E}_1 \arrow[dash]{r} & \underline{D}^q_{S^{p+1}}\arrow[dash]{u}\arrow[dash]{r}& \overline{E}_2 \arrow[dash]{r} & \underline{D}^q_{S^{p+1}}\arrow[dash]{u}\arrow[dash]{r} & |[draw=none]|\dots \arrow[dash]{r} & \overline{E}_\ell
\end{tikzcd}
\]

	By \cite[Propositions 3.2 and 3.3]{Re2}, see also \cite[Proposition 2.6]{CW} and \cite[Section 5]{Bu2}, the manifold $\partial W$ is diffeomorphic to the connected sum $E_1\#\dots\# E_\ell$. By Theorem \ref{plumbing}, the manifold $\partial W$ admits a metric of $Ric_k>0$ for all $k\geq \max\{p+2,p+k,q+1,q\}=\max\{p+2,p+k,q+1\}$.

\end{proof}

\begin{remark}
	In \cite{Bu3} it is shown that the manifolds constructed in Theorem \ref{plumbing} and Corollary \ref{sphere-bundles_conn-sum} admit a core metric, provided each base manifold of the bundles involved admits a core metric. We conjecture that these manifolds in fact admit $k$-core metrics with $k$ as given in these results. However, this conjecture is open even in the simplest case of a linear sphere bundle over a manifold with a core metric (which can be viewed as a plumbing according to a graph with a single vertex).
\end{remark}

%%%%%%%%%%

\bigskip\bigskip

\noindent{\it Philipp Reiser, Department of Mathematics, University of Fribourg, Switzerland,
	Email: philipp.reiser@unifr.ch }\\

\bigskip

\noindent {\it David Wraith, 
	Department of Mathematics and Statistics, 
	National University of Ireland Maynooth, Maynooth, 
	County Kildare, 
	Ireland. 
	Email: david.wraith@mu.ie.}


\begin{thebibliography}{999}
	
\bibitem[AB]{AB} M. Alexandrino, R. Bettiol, \textit{Lie groups and geometric aspects of isometric actions}, Springer, Cham (2015), x+213 pp.
	
\bibitem[Bu1]{Bu1} B. L. Burdick, \textit{Metrics of Positive Ricci Curvature on Connected Sums: Projective Spaces, Products, and Plumbings}, ProQuest LLC, Ann Arbor, MI (2019). Thesis (Ph.D.)--University of Oregon.

\bibitem[Bu2]{Bu2} B. L. Burdick, \textit{Ricci-positive metrics on connected sums of projective spaces}, Differential Geom. Appl. {\bf 62} (2019), 212--233.

\bibitem[Bu3]{Bu3} B. L. Burdick, \textit{Metrics of positive Ricci curvature on the connected sums of products with arbitrarily many spheres}, Ann. Global Anal. Geom. {\bf 58(4)} (2020), 433--476.

\bibitem[BH]{BH} C. Bär, B. Hanke, \textit{Local flexibility for open partial differential relations}, Comm. Pure Appl. Math. \textbf{75} (2022), no.6, 1377--1415.

%\bibitem[BHSW]{BHSW} B. Botvinnik, B. Hanke, T. Schick, M. Walsh, \textit{Homotopy groups of the moduli space of metrics of positive scalar curvature}, Geom. Topol. \textbf{14} (2010), no. 4, 2047--2076.

\bibitem[BM]{BM} R. Bettiol, R. Mendes, \textit{Strongly nonnegative curvature}, Math. Ann. \textbf{368} (2017), no. 3--4, 971--986.

%\bibitem[BWW]{BWW} B. Botvinnik, M. Walsh, D. J. Wraith, \textit{Homotopy groups of the observer moduli space of Ricci positive metrics}, Geom. Topol. {\bf 23} (2019), 3003--3040.

%\bibitem[Cha]{Cha} I. Chavel, \textit{Riemannian geometry: a modern introduction}, Cambridge Tracts in Mathematics {\bf 108}, Cambridge University Press, (1995).

\bibitem[Ch]{Ch} J. Cheeger, \textit{Some examples of manifolds of nonnegative curvature}, J. Differential Geometry {\bf 8} (1973), 623--628.

\bibitem[CW]{CW} D. Crowley, D. J. Wraith, \textit{Positive Ricci curvature on highly connected manifolds}, J. Differential Geom. \textbf{106} (2017), no.2, 187--243.

\bibitem[Gr]{Gr} M. Gromov, {\em Curvature, diameter and Betti numbers}, Comm. Math. Helv. {\bf 56} (1981), 179--195.

%\bibitem[GL1]{GL1} M. Gromov, H. B. Lawson, \textit{Spin and scalar curvature in the presence of a fundamental group. I}, Ann. of Math. (2) {\bf 111} (1980), 209--230.

\bibitem[GL]{GL} M. Gromov, H. B. Lawson, \textit{The classification of simply connected manifolds of positive scalar curvature}, Ann. of Math. (2) {\bf 111} (1980), 423--434.

\bibitem[GZ]{GZ} K. Grove, W. Ziller, \textit{Curvature and symmetry of Milnor spheres}, Ann. of Math. (2) \textbf{152} (2000), no.1, 331--367.

%\bibitem[HSS]{HSS} B. Hanke, T. Schick and W. Steimle, \textit{The Space of Metrics of Positive Scalar Curvature}, Publ. Math. Inst. Hautes \'Etudes Sci. \textbf{120} (2014), 335--367. 

\bibitem[Iw]{Iw} K. Iwata, \textit{Compact transformation groups on rational cohomology Cayley projective planes}, Tohoku Math. J. (2) \textbf{33} (1981), no.4, 429--442.

%\bibitem[Ko]{Ko} N. N. Kosovskii, \textit{Gluing of Riemannian manifolds of curvature at least $\kappa$}, St. Petersbg. Math. J. 14, No. 3, 467--478 (2003); translation from Algebra Anal. 14, No. 3, 140--157 (2002).

%\bibitem[LM]{LM} H. B. Lawson, M.-L. Michelsohn, \textit{Embedding and surrounding with positive mean curvature}, Invent. Math. {\bf 77} (1984), 399--419.

%\bibitem[Mi]{Mi} P. Miao, \textit{Positive mass theorem on manifolds admitting corners along a hypersurface}, Adv. Theor. Math. Phys. {\bf 6} (2002), 1163--1182.

\bibitem[Mo]{Mo} L. Mouill\'e, \textit{Intermediate Ricci curvature database}: \url{https://sites.google.com/site/lgmouille/research/intermediate-ricci-curvature}

\bibitem[Re1]{Re1} P. Reiser, \textit{Generalized surgery on Riemannian manifolds of positive Ricci curvature}, Trans. Amer. Math. Soc. \textbf{376} (2023), no.5, 3397--3418.

\bibitem[Re2]{Re2} P. Reiser, \textit{Metrics of Positive Ricci Curvature on Simply-Connected Manifolds of Dimension $6k$}, arXiv:2210.15610 (2022).

\bibitem[RW1]{RW1} P. Reiser, D. J. Wraith, \textit{Intermediate Ricci curvatures and Gromov's Betti number bound}, J. Geom. Anal. {\bf 33} (2023), 364.

\bibitem[RW2]{RW2} P. Reiser, D. J. Wraith, \textit{Positive intermediate Ricci curvature on fibre bundles}, arXiv:2211.14610 (2022).

\bibitem[RW3]{RW3} P. Reiser, D. J. Wraith, \textit{A generalization of the Perelman gluing theorem and applications}, arXiv:2308.06996 (2023).

\bibitem[Pe]{Pe} G. Perelman, {\em Construction of manifolds of positive Ricci curvature with big volume and large Betti numbers}, Comparison Geometry, MSRI publications {\bf 30} (1997), 157--163.

%\bibitem[Sc]{S} A. Schlichting, {\em Gluing Riemannian manifolds with curvature operators at least $\kappa$}, arXiv:1210.2957 (2012).

%\bibitem[Sh1]{Sha1} J.-P. Sha, {\em $p$-convex Riemannian manifolds}, Invent. Math. {\bf 83} (1986), 437--447.

%\bibitem[Sh2]{Sha2} J.-P. Sha, {\em Handlebodies and $p$-convexity}, J. Differential Geom. {\bf 25} (1987), 353--361.

%\bibitem[TW]{TW} W. Tuschmann, D. Wraith \textit{Moduli Spaces of Riemannian Metrics,} Oberwolfach Seminars \textbf{46}, Birkhauser 2015.

%\bibitem[Wa]{Wa} C. T. C. Wall, {\em Geometrical connectivity. I}, J. London Math. Soc. (2) {\bf 3} (1971), 597--604.

%\bibitem[Wo]{Wo} J. Wolfson, {\em Manifolds with $k$-positive Ricci curvature}, Variational problems in differential geometry, 182--201, London Math. Soc. Lecture Note Ser., 394, Cambridge Univ. Press, Cambridge, 2012.

\bibitem[Wr1]{Wr1} D. J. Wraith, \textit{Exotic spheres with positive Ricci curvature}, J. Differential Geom. \textbf{45} (1997), no.3, 638--649.

\bibitem[Wr2]{Wr2} D. J. Wraith, \textit{Deforming Ricci positive metrics}, Tokyo J. Math. \textbf{25} (2002), no.1, 181--189.

\bibitem[Wu]{Wu} H. Wu, {\em Manifolds of partially positive curvature}, Indiana Univ. Math. J. {\bf 36(3)} (1987), 525--548.


\end{thebibliography}
\end{document}